\documentclass[a4paper,oneside,10pt]{article}

\usepackage[utf8]{inputenc}    
\usepackage[a4paper, height = 24cm]{geometry}
\usepackage{pgfplots}
\usetikzlibrary{arrows.meta}

\usepackage{nicefrac}
\usepackage{amsmath,amssymb}
\usepackage{dsfont}

\usepackage{array}
\usepackage{booktabs}
\usepackage{tikz}
\usepackage{graphicx}
\usepackage{bbm}   
\usepackage{comment}

\usepackage{url}
\usepackage{nameref}
\usepackage[colorlinks, linkcolor = black, citecolor = black, filecolor = black, urlcolor = blue]{hyperref} 

\usepackage{ifthen}
\usepackage{natbib}
\usepackage{color}
\usepackage{epstopdf}
\usepackage{caption}
\usepackage{subcaption}
\usepackage{arydshln}
\usepackage{longtable}
\usepackage{marvosym}
\usepackage{cases}
\usepackage{enumitem}

\usepackage{amsthm}

\theoremstyle{plain}
\newtheorem{theorem}{Theorem}
\newtheorem{lemma}[theorem]{Lemma}
\newtheorem{proposition}[theorem]{Proposition}

\theoremstyle{definition}

\newtheorem{example}[theorem]{Example}
\newtheorem{remark}[theorem]{Remark}
\newtheorem*{remark*}{Remark}
\newtheorem{assumption}{Assumption}

\newcommand{\rd}{\mathrm{d}}

\newcommand{\mc}{\mathcal}
\newcommand{\dx}{\mathrm{d}}
\newcommand{\e}{\varepsilon}
\newcommand{\bs}[1]{\boldsymbol{#1}} 

\newcommand{\dd}{\mathrm{d}}

\newcommand{\expec}{{\mathbb{E}}}
\newcommand{\prob}{{\mathbb{P}}}

\newcommand{\ind}{\text{\bf{1}}} 

\newcommand{\sign}{{\operatorname{sign}}}

\newcommand{\floor}[1]{\left\lfloor#1\right\rfloor}
\newcommand{\ceil}[1]{\left\lceil#1\right\rceil}
\newcommand{\abs}[1]{\lvert #1 \rvert}

\newcommand{\card}{\operatorname{card}}

\newcommand{\norm}[1]{{\lVert#1\rVert}} 

\newcommand{\normHS}[1]{{\lVert#1\rVert_{\mathrm{HS}}}} 

\newcommand{\normLC}[1]{{\lVert#1\rVert_{\mathrm{L_2\to C}}}} 

\newcommand{\norminf}[1]{{\lVert#1\rVert_{\infty}}} 

\newcommand{\normb}[1]{{\big\lVert#1\big\rVert}}

\newcommand{\absb}[1]{\big|#1\big|}

\newcommand{\cd}{{c_\mathrm{des}}}

\newcommand{\R}{{\mathbb{R}}}
\newcommand{\N}{{\mathbb{N}}}

\newcommand*{\defeq}{\mathrel{\vcenter{\baselineskip0.5ex \lineskiplimit0pt
			\hbox{\scriptsize.}\hbox{\scriptsize.}}}%
	=}
\newcommand*{\defeql}{ = \mathrel{\vcenter{\baselineskip0.5ex   	\lineskiplimit0pt
			\hbox{\scriptsize.}\hbox{\scriptsize.}}}%
}

\usepackage[colorinlistoftodos,textsize=tiny]{todonotes}
\newcommand{\Comments}{1}
\newcommand{\mynote}[2]{\ifnum\Comments=1\textcolor{#1}{#2}\fi}
\newcommand{\mytodo}[2]{\ifnum\Comments=1%
	\todo[linecolor=#1!80!black,backgroundcolor=#1,bordercolor=#1!80!black]{#2}\fi}

\ifnum\Comments=1               
\fi

\newcommand{\wjk}[3]{w_{j,k}({#1},{#2};{#3})} 

\newcommand{\Zclass}[0]{\mc P(\gamma)}
\newcommand{\Zclassk}[0]{\mc P(\gamma;k)}

\newcommand{\covest}[3]{\hat \Gamma_{n}({#1}, {#2};{#3})}

\newcommand{\hgam}{\hat \Gamma}

\newcommand{\phikest}[2]{\hat\phi_{k;n}({{#1};{#2}})}
\newcommand{\lamkest}[1]{\hat\lambda_{k;n}({{#1}})}

\newcommand{\Cmax}{C_1}
\newcommand{\Clip}{C_2}
\newcommand{\Ccard}{C_3}

\usepackage[colorinlistoftodos,textsize=tiny]{todonotes}

\ifnum\Comments=1               
\fi

\definecolor{revcolor}{named}{red}

\title{Transferring supremum-norm rates and weak convergence of covariance kernel estimators to functional principal components }
\author{Hajo Holzmann and Kevin Wilk \footnote{Corresponding author. Prof. Dr. Hajo Holzmann, Department of Mathematics and Computer Science, Philipps-Universit\"at Marburg, Hans-Meerweinstr., 35043 Marburg, Germany}\\
    \small Department of Mathematics and Computer Science\\ 
    \small Philipps-Universit\"at Marburg\\
    \small \{holzmann, wilk\}@mathematik.uni-marburg.de }
    
\date{} 

\begin{document}

\maketitle

\begin{abstract}
We show that $L_2$-perturbation theory can be used to transfer rates of convergence in the supremum norm as well as weak convergence in the space of continuous functions from covariance kernel estimators to the associated functional principle components (FPCs). As an application we obtain optimal rates of convergence in sup-norm, including minimax-lower bounds,  as well as asymptotic normality for estimating the FPCs in a discrete observational model with errors under fixed, synchronous design. The sparse to  dense transition which has previously been observed for mean function and covariance kernel estimators also applies to the FPCs. Surprisingly, eigenvalue estimation exhibits a discretization-dominated regime under sparse designs, too.    
Our results further apply to estimators of cross-covariance and long-run covariance kernels, as well as to covariance kernels of derivative processes. We also present results of numerical experiments in which we use the Nyström method to compute FPCs and eigenvalues, and give an empirical illustration to series of daily temperature curves.  
\end{abstract}

\vspace{3mm}

\noindent {\itshape Keywords.}\quad Covariance kernel; functional data; functional principal components; optimal rates of convergence; supremum norm;  synchronously sample data

\section{Introduction}

In functional data analysis for second order stochastic processes the eigenfunctions of the covariance operator, that is the functional principal component (FPC) basis functions, are a major inferential tool \citep{wang2016functional, ramsay2005functional}. Due to the Karhunen-Loève expansion of the process, the FPCs associated to the first few largest eigenvalues are the dominant modes of variation of the process, and hence have exploratory and scientific interpretations. Further the FPC scores, that is the inner products of FPCs and the observed process serve for dimension reduction in downstream tasks such as classification \citep{DaiMueller2027}. Here a large number of FPC scores, potentially even growing with the sample size, may be required. 

For FPC estimates derived from the empirical covariance operator of a  continuously observed process,   \citet{hall2009theory, HaaMuellerWang2006} provide asymptotic expansions. For discretely observed processes, rates of convergence for the FPCs are obtained in \citet{HaaMuellerWang2006, ZhouWeiYao2025, li2010uniform, cai2010nonparametric, belhakem2025minimax}, both under asynchronous random sampling  as well as under synchronous, fixed sampling. These results are typically derived by combining covariance kernel estimation with Hilbert-space operator perturbation arguments \citep[Chapter 5]{hsing2015theoretical}.

 But supremum norm error bounds and weak convergence in the space of continuous functions $C[0,1]$ are not naturally tied to the Hilbert space structure of $L_2$. The supremum norm arguably corresponds more closely to the visualization of the estimation error, and weak convergence in $C[0,1]$ forms the basis for the construction of uniform confidence bands \citep{liebl2023fast, dette2020functional}. Thus, both are of interest for analyzing more closely the FPCs corresponding to the largest eigenvalues. 
In this paper we show how $L_2$-perturbation theory can still be used here. The key observation is that covariance operators generated by continuous kernels act continuously from $L_2$ into $C[0,1]$, with operator norm controlled by the supremum norm of the kernel. Together with the eigenfunction-eigenvalue equation of FPCs this allows $L_2$-perturbation expansions to be combined with supremum norm bounds for covariance kernel estimators. 
Our results also apply to estimates of symmetric, continuous, not necessarily positive semidefinite kernels of integral operators such as symmetrized cross-covariance kernels.     

As an application, we establish minimax-optimal rates and asymptotic normality in $C[0,1]$ for estimating FPCs and eigenvalues under synchronous fixed designs with observational errors. The rates exhibit the same sparse-to-dense phase transition previously observed for mean function and covariance kernel estimation \citep{berger2024dense, berger2024optimal}. To derive matching lower bounds for eigenfunctions, we introduce a semimetric that accounts for the inherent sign indeterminacy of eigenfunctions corresponding to simple eigenvalues. Interestingly, eigenvalue estimation also exhibits a discretization-dominated regime under sparse designs.
Further, for differentiable processes we also indicate how the  upper bounds extend to estimating FPCs corresponding to the covariance kernel of the derivative process.  

The paper is structured as follows. After recalling basic operator and spectral theory for integral operators with symmetric kernels in Section \ref{sec:basicfa}, in Section \ref{sec:perturbationtheory} we give a general upper bound in sup-norm for estimates of the FPCs for simple eigenvalues in terms of the sup-norm of the associated covariance kernel estimate. Section \ref{sec:weakconv} then shows how to leverage weak convergence in the space of continuous functions to FPCs. Section \ref{sec:ratesasy} contains minimax-optimal rates, including lower bounds, for estimating FPCs in case of synchronous, discrete design under the supremum norm. Finally, a numerical implementation using the Nyström method together with simulation results is  presented in Section \ref{sec:numerics}. Some additional technical derivations are deferred to a supplementary appendix.

\section{From covariance kernel estimates to functional principal components: Upper bounds and weak convergence} \label{sec:model:estimators}

\subsection{Covariance kernels and functional principal components}\label{sec:basicfa}

Now let $Z = (Z(x))_{x \in [0,1]}$ be a centered, square-integrable process, with covariance kernel 
$$\Gamma(x,y) = \expec[Z(x)\, Z(y)].$$
We assume that $\Gamma$ is continuous, which is the case if and only if $Z$ is continuous in quadratic mean. 
The integral operator associated with $\Gamma$ is denoted by $T_\Gamma$.

Let us briefly discuss properties of integral operators with symmetric, square-integrable kernels $G\colon[0,1]^2 \to \R$, see e.g.~\citet[Chapter 4]{hsing2015theoretical} or \citet[Section 2]{horvath2012inference}.  
The integral operator associated to $G$ is 
\[ (T_G\, f)(x) = \int_0^1\, G(x,y)\, f(y)\,  \dx y, \qquad f \in L_2[0,1].\]
As a mapping $T_G\colon L_2[0,1] \to L_2[0,1]$, $T_G$ is a self-adjoint  Hilbert-Schmidt operator, and in particular compact. For the $L_2$-operator norm $\|T_G\|_{L_2}$ and the Hilbert-Schmidt norm $\normHS{T_G}$ we have that  
\begin{equation}\label{eq:l2operator}
\|T_G\|_{L_2}  \leq \normHS{T_G} = \| G \|_{L_2([0,1]^2)}. 
\end{equation}
%
%
From the spectral theorem for compact symmetric operators we obtain an orthonormal basis $(\phi_{k,G})_{k \in \N}$ of eigenfunctions with associated eigenvalues $\lambda_{k,G}$, so that 
\begin{equation}\label{eq:eigenrel}
T_G\,\phi_{k,G} = \lambda_{k,G}\, \phi_{k,G}.
\end{equation}
Every eigenvalue except possibly for $0$ has finite multiplicity, note that we repeat the eigenvalues in \eqref{eq:eigenrel} with their multiplicities. 
The eigenvalue $\lambda_{k,G}$ is simple by definition if the associated eigenspace is one-dimensional. In this case the eigenfunction $\phi_{k,G}$ is uniquely determined up to sign, and  
\begin{equation}\label{eq:spectralgap}
\delta_{k,G} = \min_{j \not= k} |\lambda_{k,G} - \lambda_{j,G}|>0.
\end{equation}
We call $\delta_{k,G}$ the spectral gap associated to the simple eigenvalue $\lambda_{k,G}$. 

Now suppose that the kernel $G$ is continuous (and symmetric). Then, importantly for us, the operator $T_G$ maps $L_2[0,1]$ into $C[0,1]$. Further, as a linear  mapping  
$ T_G\colon L_2[0,1] \to C[0,1] $, $T_G$ is also bounded and even compact with mixed operator norm 
\begin{equation}\label{eq:operatornormmixed}
\normLC{T_G} : = \sup_{f \in L_2[0,1],\, \|f\|_{L_2}=1}\, \| T_G (f)\|_{\infty} = \sup_{x \in [0,1]}\, \| G(x,\cdot)\|_{L_2[0,1]} \leq \| G \|_{\infty},
\end{equation}
where $\norm{\,\cdot\,}_\infty$ denotes the supremum norm, with the domain $[0,1]^2$ or $[0,1]$ being  clear from the context.
In particular, all eigenfunctions corresponding to non-zero eigenvalues are continuous: 
\begin{equation}\label{eq:eigenfctexpress}
 \phi_{k,G} = \frac{T_G\, \phi_{k,G}}{\lambda_{k,G}} \in C[0,1].
\end{equation} 
Coming back to the covariance kernel $\Gamma$, in the following the eigenvalues $\lambda_{k,\Gamma}$ and eigenfunctions, also called principal component basis functions $\phi_{k,\Gamma}$ of $T_\Gamma$ will simply be denoted by $\lambda_k$ and $\phi_k$. Since $T_\Gamma$ is positive, all eigenvalues are non-negative, which allows to order them decreasingly as
\begin{equation}
\lambda_1 \geq \lambda_2 \geq \ldots \geq 0,
\end{equation}
Then in \eqref{eq:spectralgap},  
\begin{equation}\label{eq:specgapcovkern}
\delta_{k,\Gamma}= \delta_k = \min\big(\lambda_{k-1} - \lambda_k, \lambda_{k} - \lambda_{k+1}\big), \quad k \geq 2\quad \text{and}\quad \delta_1 = \lambda_{1} - \lambda_{2}.
\end{equation}
and $\lambda_k$ is simple if and only if $\delta_k>0$.

\subsection{Perturbation theory and a upper bound in supremum norm}\label{sec:perturbationtheory}

Let $\hgam(x,y)$ be a second integral kernel, symmetric and continuous but not necessarily positive, the notation suggesting that later $\hgam$  will an estimator of the covariance kernel $\Gamma$. We denote the associated integral operator by $T_{\hgam} = \hat T$, and its eigenfunctions and eigenvalues by $\hat \phi_j$ and $\hat \lambda_j$, $j \geq 1$, ordered as
\begin{equation}
\hat \lambda_1 \geq \hat \lambda_2 \geq \ldots. 
\end{equation}
Again, from \eqref{eq:eigenfctexpress} applied to $\hat T$, eigenfunctions $\hat \phi_k$ to non-zero eigenvalues $\hat \lambda_k$ are continuous.  

Suppose that $\delta_k>0$ (given in \eqref{eq:specgapcovkern}), so that $\lambda_k$ is simple and $\phi_k$ is unique up to sign. To take into account the potentially opposite orientation of $\hat \phi_k$, set
\begin{equation}\label{eq:signcoefficient}
 c_k = \sign\big(\langle \hat \phi_k, \phi_k\rangle\big)
\end{equation}
and consider the difference $\hat \phi_k - c_k\, \phi_k$. 
Our arguments for analyzing estimates of principal component basis functions in the supremum norm are based on the following simple expansion:
If $\lambda_k, \hat \lambda_k  >0$  then 
\begin{align}
    \hat \phi_k - c_k\, \phi_k & = \frac{\hat T\, \hat \phi_k}{\hat \lambda_k} - \frac{T_\Gamma\, c_k\, \phi_k}{\lambda_k}\nonumber\\
    & = \frac{\big(\hat T-T_\Gamma\big)\, \hat \phi_k}{\hat \lambda_k} - \frac{\hat \lambda_k - \lambda_k}{\hat \lambda_k\, \lambda_k}\, T_\Gamma\, \hat \phi_k + \frac{1}{\lambda_k}\, T_\Gamma\, \big(\hat \phi_k -c_k\, \phi_k\big). \label{eq:basicexpansion}
\end{align}
Using $\| \hat \phi_k \|_{L_2}=1$ we obtain
\begin{align*}
    \big\| \hat \phi_k - c_k\, \phi_k\big\|_\infty \leq &  
     \frac{\normLC{\hat T-T_\Gamma}  }{\hat \lambda_k} \,
    + \, \normLC{ T_\Gamma}\, \Big(\frac{|\lambda_k - \hat \lambda_k|}{\hat \lambda_k\, \lambda_k}  + \frac{1}{\lambda_k}\,  \big\|\hat \phi_k - c_k\,  \phi_k\big\|_{L_2}\Big). 
\end{align*}
A standard result in perturbation theory, Weil's inequality, e.g.~\citet[Theorem 4.28]{hsing2015theoretical} allows to bound the difference of eigenvalues of $T_\Gamma$ and $\hat T$ in terms of the $L_2$-operator distance, 
\begin{align}
    \sup_{j \geq 1} \big| \hat \lambda_j - \lambda_j\big| & \leq \big\|\hat T - T_\Gamma \big \|_{L_2} , \label{eq:spectraleigenvalues}
\end{align}
Using \eqref{eq:spectraleigenvalues} and the  bounds  \eqref{eq:l2operator} and \eqref{eq:operatornormmixed} for the operator norms $\big\|\hat T - T_\Gamma \big \|_{L_2}$ and $\normLC{\hat T-T_\Gamma}$  we obtain
\begin{align}\label{eq:generalupperbound}
    \big\| \hat \phi_k - c_k\, \phi_k\big\|_\infty \leq & \frac{\big\|\hgam - \Gamma \big \|_\infty}{\hat \lambda_k}\, \Big(1 +  \frac{\big\|\Gamma \big \|_\infty}{\lambda_k}\Big) + \frac{\big\|\Gamma \big \|_\infty}{\lambda_k}\,  \big\|\hat \phi_k - c_k\,  \phi_k\big\|_{L_2}\Big).  
\end{align}
%
%
%
%
A simple but useful bound for $\big\|\hat \phi_k - c_k\,  \phi_k\big\|_{L_2}$ is the sine-$\theta$-theorem (in its functional version), see e.g.~\citet[Lemma 2.3]{horvath2012inference} or \citet[Lemma 4.3]{bosq2000linear}, which states that  
\begin{align}
    \| \hat \phi_k - c_k\, \phi_k\|_{L_2} & \leq \frac{2\, \sqrt 2}{\delta_k} \,  \big\|\hat T - T_\Gamma \big \|_{L_2} ,\label{eq:spectraleigenfct}
\end{align}
where $\delta_k$ is defined in \eqref{eq:specgapcovkern}. 
We obtain the following overall bound. 
\begin{proposition}\label{lem:supnormbound}
    Let $\Gamma$ be a continuous covariance kernel on $[0,1]^2$ with eigenfunctions $\phi_j$ and associated eigenvalues $\lambda_j$, ordered decreasingly, and let $\hgam$ be a continuous symmetric integral kernel on $[0,1]^2$ with eigenfunctions $\hat \phi_j$ and  eigenvalues $\hat \lambda_j$ (again ordered decreasingly). For given $k$, suppose that $\lambda_k>0$ is a simple eigenvalue, so that $\delta_k>0$, and that $\hat \lambda_k>0$. Then
    \begin{align}
    \big\| \hat \phi_k - c_k\, \phi_k\big\|_\infty \leq & \big\|\hgam - \Gamma \big \|_\infty\, \Big(\frac1{\hat \lambda_k} +  \frac{\big\|\Gamma \big \|_\infty}{\hat \lambda_k\, \lambda_k} + \frac{2\, \sqrt 2 \, \big\|\Gamma \big \|_\infty}{\delta_k\, \lambda_k}\Big), \label{eq:supboundeigenfct}
\end{align}
    where $c_k$ is defined in \eqref{eq:signcoefficient}.
\end{proposition}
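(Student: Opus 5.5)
The plan is to combine the algebraic expansion \eqref{eq:basicexpansion} with the operator-norm bounds already collected, and then close the argument with the sine-$\theta$ bound \eqref{eq:spectraleigenfct}. Since $\lambda_k>0$ and $\hat\lambda_k>0$, both $\phi_k$ and $\hat\phi_k$ are continuous by \eqref{eq:eigenfctexpress}. The identity \eqref{eq:basicexpansion} is checked by expanding: writing $\hat\phi_k = \hat T\hat\phi_k/\hat\lambda_k$ and $c_k\phi_k = T_\Gamma c_k\phi_k/\lambda_k$, add and subtract $T_\Gamma\hat\phi_k/\hat\lambda_k$ and $T_\Gamma\hat\phi_k/\lambda_k$. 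This gives
\[
\frac{(\hat T - T_\Gamma)\hat\phi_k}{\hat\lambda_k} + T_\Gamma\hat\phi_k\Big(\frac1{\hat\lambda_k}-\frac1{\lambda_k}\Big) + \frac{T_\Gamma(\hat\phi_k - c_k\phi_k)}{\lambda_k},
\]
and $\frac1{\hat\lambda_k}-\frac1{\lambda_k} = -\frac{\hat\lambda_k-\lambda_k}{\hat\lambda_k\lambda_k}$.

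Next I take sup-norms term by term. Each term is $T_G f$ with $G$ continuous, so \eqref{eq:operatornormmixed} gives $\|T_G f\|_\infty \le \|G\|_\infty \|f\|_{L_2}$. Using $\|\hat\phi_k\|_{L_2}=1$, the three terms are bounded respectively by:
\begin{itemize}
\item $\|\hgam-\Gamma\|_\infty/\hat\lambda_k$;
\item $\|\Gamma\|_\infty\,|\hat\lambda_k-\lambda_k|/(\hat\lambda_k\lambda_k)$;
\item $\|\Gamma\|_\infty\,\|\hat\phi_k-c_k\phi_k\|_{L_2}/\lambda_k$.
\end{itemize}
For the eigenvalue difference I use Weyl's inequality \eqref{eq:spectraleigenvalues}, followed by \eqref{eq:l2operator} and the elementary fact that $\|G\|_{L_2([0,1]^2)}\le\|G\|_\infty$ on the unit square. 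Together these give $|\hat\lambda_k-\lambda_k|\le\|\hgam-\Gamma\|_\infty$, which yields \eqref{eq:generalupperbound}.

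Finally I insert the sine-$\theta$ bound \eqref{eq:spectraleigenfct}, again combined with $\|\hat T - T_\Gamma\|_{L_2}\le\|\hgam-\Gamma\|_{L_2}\le\|\hgam-\Gamma\|_\infty$. The last term then becomes $2\sqrt2\,\|\Gamma\|_\infty\|\hgam-\Gamma\|_\infty/(\delta_k\lambda_k)$, and factoring out $\|\hgam-\Gamma\|_\infty$ gives \eqref{eq:supboundeigenfct}.

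The main point needing care is the applicability of Weyl's inequality and the sine-$\theta$ theorem when $\hgam$ is not positive semidefinite, since $\hat T$ may then have negative eigenvalues. The ordering and indexing conventions must match those in \eqref{eq:spectraleigenvalues}: the $\hat\lambda_j$ are ordered decreasingly, including negative ones, while the positive operator $T_\Gamma$ has nonnegative eigenvalues. I would check that the cited versions cover self-adjoint compact operators with this ordering; one option is to compare via the min-max characterization for the top $k$ eigenvalues, which is all that is needed since $\hat\lambda_k>0$. The sine-$\theta$ lemma in \citet[Lemma 2.3]{horvath2012inference} is stated for symmetric Hilbert--Schmidt operators with the sign choice $c_k$ as in \eqref{eq:signcoefficient}, so it applies directly. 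Everything else is routine.
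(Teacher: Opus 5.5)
Your proposal is correct and follows the paper's own argument step for step: the expansion \eqref{eq:basicexpansion}, termwise sup-norm bounds via \eqref{eq:operatornormmixed} and $\|\hat\phi_k\|_{L_2}=1$, Weyl's inequality \eqref{eq:spectraleigenvalues} combined with \eqref{eq:l2operator} and $\|\cdot\|_{L_2([0,1]^2)}\le\|\cdot\|_\infty$ (which the paper uses tacitly), and finally the sine-$\theta$ bound \eqref{eq:spectraleigenfct}. Your caution about indefinite $\hgam$ concerns a point the paper leaves implicit: the min-max route settles Weyl's inequality since $\lambda_k,\hat\lambda_k>0$, and the same caveat for the sine-$\theta$ citation is harmless, because if $\|\hat T-T_\Gamma\|_{L_2}\ge\delta_k/2$ the bound holds trivially (as $\|\hat\phi_k-c_k\phi_k\|_{L_2}\le\sqrt2$), while otherwise the standard Davis--Kahan computation together with Weyl's inequality yields \eqref{eq:spectraleigenfct} directly.
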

\begin{remark}[Optimality]
    The bound \eqref{eq:supboundeigenfct} allows to leverage sup-norm bounds of covariance kernel estimates to sup-norm bounds of the associated principal component basis functions. Similar arguments are used e.g.~in the proof of \citet[Theorem 3.6, (c)]{li2010uniform}. 

    While bounding the estimation error of the one-dimensional function $\phi_k$ by that of the two-dimensional function $\Gamma$ may appear sup-optimal at first glance, the covariance kernel $\Gamma$ can be estimated with one-dimensional rates in various settings, see e.g.~\citet{berger2024optimal, cai2010nonparametric}.    
    In Section \ref{sec:ratesasy} we show in detail that the bound \eqref{eq:supboundeigenfct} gives optimal rates for principal component basis function estimation in a discrete observational model with synchronous design points. 

    The bound \eqref{eq:supboundeigenfct} is intended and useful for fixed $k$, and does not capture the effect of the eigenvalues optimally. See the discussion in \citet{ZhouWeiYao2025}. But in applications, arguably the very first eigenfunctions corresponding to $k=1$ and potentially $k=2$ are of particular practical interest.

\end{remark}

\subsection{Transferring weak convergence}\label{sec:weakconv}

Let us now show how to transfer weak convergence of covariance kernel estimates in the space of continuous functions $C([0,1]^2)$ to that of the associated principal component basis functions in $C[0,1]$. 

Assume that the sequence $\hgam_n$ of estimators of $\Gamma$ satisfies
\begin{equation}\label{eq:weakconvcovkernel}
    \sqrt n \big(\hgam_n - \Gamma \big) \,  \stackrel{D}\rightarrow \, \mc G \qquad \text{in}\quad C([0,1]^2).
\end{equation}
Here $\mc G$ is a tight random variable in $C([0,1]^2)$, typically centered Gaussian.
We denote the associated $k$-th principal component basis functions and eigenvalues of $\hgam_n$ by $\hat \phi_{k;n}$ and $\hat \lambda_{k;n}$. 
\begin{theorem}\label{th:asympnorm}
      For given $k$, suppose that $\lambda_k>0$ is a simple eigenvalue of $\Gamma$, so that $\delta_k>0$. Further assume that \eqref{eq:weakconvcovkernel} holds, and that $\phi_k$, $\hat \phi_{k;n}$ are chosen such that $\langle \hat \phi_{k,n}, \phi_k\rangle\geq 0$. Then 
\begin{align}
\sqrt n\, \big( \hat \phi_{k;n} -  \phi_k \big)\,  \stackrel{D}\rightarrow \,  \sum_{j \not= k} \frac{\langle T_{\mc G} \phi_k,\phi_j \rangle}{\lambda_k - \lambda_j} \, \phi_j\qquad \text{in}\quad C[0,1],\label{eq:asympcent} 
\end{align}
    
    where $\mc G$ is from \eqref{eq:weakconvcovkernel} and $T_{\mc G}$ is the (random) integral operator associated with the bivariate kernel $\mc G$. 
\end{theorem}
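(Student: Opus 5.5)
We would start from the expansion \eqref{eq:basicexpansion}, rearranged so that the unknown difference appears only on the left. Write $\Delta_n = \hgam_n - \Gamma$ and $\hat T_n - T_\Gamma = T_{\Delta_n}$. By \eqref{eq:weakconvcovkernel} and Slutsky, $\|\Delta_n\|_\infty = O_P(n^{-1/2})$. Weyl's inequality \eqref{eq:spectraleigenvalues} then gives $\hat\lambda_{k;n} \to \lambda_k$ in probability, so $\hat\lambda_{k;n}>0$ with probability tending to one. Proposition~\ref{lem:supnormbound} gives $\|\hat\phi_{k;n} - \phi_k\|_\infty = O_P(n^{-1/2})$, and since $\langle \hat\phi_{k;n},\phi_k\rangle\ge 0$, we have $c_k=1$. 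The plan is to show
\[
\sqrt n(\hat\phi_{k;n}-\phi_k) = \Psi_k(\sqrt n\,\Delta_n) + o_P(1) \quad\text{in } C[0,1],
\]
where $\Psi_k\colon C([0,1]^2)\to C[0,1]$, $\Psi_k(G) = \sum_{j\ne k}(\lambda_k-\lambda_j)^{-1}\langle T_G\phi_k,\phi_j\rangle\phi_j$, is a bounded linear map. The continuous mapping theorem then yields \eqref{eq:asympcent}.

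\textbf{Step 1: $\Psi_k$ is bounded into $C[0,1]$.} Let $R_k$ be the reduced resolvent $R_k = \sum_{j\ne k}(\lambda_k-\lambda_j)^{-1}\langle\cdot,\phi_j\rangle\phi_j$. It is bounded on $L_2$ with norm $\le 1/\delta_k$, which covers $\lambda_j=0$ through the kernel projection. Then $\Psi_k(G) = R_k T_G\phi_k$. Because $R_k$ commutes with $T_\Gamma$ on the complement of $\phi_k$, we can write
\[
R_k = \lambda_k^{-1}\big(T_\Gamma R_k + (I-P_k)\big), \qquad P_k=\langle\cdot,\phi_k\rangle\phi_k .
\]
This gives
\[
\Psi_k(G) = \lambda_k^{-1}\big(T_\Gamma R_k T_G\phi_k + T_G\phi_k - \langle T_G\phi_k,\phi_k\rangle\phi_k\big).
\]
Each term is continuous, with sup-norm $\lesssim \|G\|_\infty$ by \eqref{eq:operatornormmixed}. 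This also shows that the series converges in $C[0,1]$, which is a point worth checking.

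\textbf{Step 2: linearization in $C[0,1]$.} Using \eqref{eq:basicexpansion} with $c_k=1$ and writing $h_n = \hat\phi_{k;n}-\phi_k$, we have
\[
(\lambda_k - T_\Gamma)h_n = \frac{\lambda_k}{\hat\lambda_{k;n}}T_{\Delta_n}\hat\phi_{k;n} - \frac{\hat\lambda_{k;n}-\lambda_k}{\hat\lambda_{k;n}}T_\Gamma\hat\phi_{k;n}.
\]
We split $h_n = \langle h_n,\phi_k\rangle\phi_k + (I-P_k)h_n$. Normalization gives $\langle h_n,\phi_k\rangle = -\tfrac12\|h_n\|_{L_2}^2 = O_P(n^{-1})$, so the first part is negligible even after scaling by $\sqrt n$. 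On the complement of $\phi_k$, apply $R_k$ (that is, invert $\lambda_k - T_\Gamma$) to obtain
\[
(I-P_k)h_n = R_k\Big[\tfrac{\lambda_k}{\hat\lambda_{k;n}}T_{\Delta_n}\hat\phi_{k;n} - \tfrac{\hat\lambda_{k;n}-\lambda_k}{\hat\lambda_{k;n}}T_\Gamma\hat\phi_{k;n}\Big].
\]
Since $R_kT_\Gamma\phi_k=0$, the second term becomes $O_P(n^{-1/2})\,R_kT_\Gamma h_n$. Here $\|R_kT_\Gamma h_n\|_\infty$ is $O_P(n^{-1/2})$, by the sup-norm representation of $R_k$ from Step 1 together with $\|h_n\|_{L_2}=O_P(n^{-1/2})$. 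For the first term we write
\[
R_kT_{\Delta_n}\hat\phi_{k;n} = \Psi_k(\Delta_n) + R_kT_{\Delta_n}h_n + (\lambda_k/\hat\lambda_{k;n}-1)R_kT_{\Delta_n}\hat\phi_{k;n}.
\]

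\textbf{Step 3: bounding the remainders in sup-norm (main obstacle).} In Step 2 the identity was stated as an equality in $L_2$. Both sides are continuous, however, because of the representation of $R_k$ from Step 1, so it holds pointwise. The same representation gives $\|R_k T_{\Delta_n} f\|_\infty \lesssim \|\Delta_n\|_\infty\|f\|_{L_2}$, with a constant depending on $\lambda_k$, $\delta_k$ and $\|\Gamma\|_\infty$. Hence $\|R_kT_{\Delta_n}h_n\|_\infty = O_P(n^{-1/2})\cdot O_P(n^{-1/2})$, and the eigenvalue-ratio term is of the same order. After multiplying by $\sqrt n$, all remainders are $o_P(1)$ in $C[0,1]$. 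Finally, $\sqrt n\,\Psi_k(\Delta_n) = \Psi_k(\sqrt n\Delta_n)\stackrel{D}\to\Psi_k(\mc G)$ by continuity of $\Psi_k$, and Slutsky's lemma in $C[0,1]$ completes the argument. The delicate point is the sup-norm control of $R_k$. If the commutation identity in Step 1 proves awkward, we would instead bound everything directly from \eqref{eq:basicexpansion}, applied to $h_n-\lambda_k^{-1}T_\Gamma h_n$ and iterated once.
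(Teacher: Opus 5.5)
Your proof is correct, but it takes a different route from the paper's.

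\textbf{The paper's route.} The paper keeps the unknown difference on the right-hand side of \eqref{eq:basicexpansion}, through the smoothing term $\lambda_k^{-1}T_\Gamma(\hat\phi_{k;n}-\phi_k)$. It therefore needs the \emph{joint} limit of $\sqrt n(\hgam_n-\Gamma,\hat\lambda_{k;n}-\lambda_k,\hat\phi_{k;n}-\phi_k)$ in $C([0,1]^2)\times\R\times L_2[0,1]$. This joint limit is imported from the Fr\'echet differentiability of the eigenvalue and eigenfunction maps (\citet[Theorems 5.1.6, 5.1.8]{hsing2015theoretical}) via the functional delta method. The continuous map $\Phi$ in \eqref{eq:limit} then transfers the limit to $C[0,1]$. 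Finally, the limit is identified by comparison with the $L_2$-limit.

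\textbf{Your route.} You invert $\lambda_k-T_\Gamma$ on $\phi_k^\perp$ using the reduced resolvent. Your key observation is $(\lambda_k-T_\Gamma)R_k=I-P_k$, i.e.\ $R_k=\lambda_k^{-1}(T_\Gamma R_k+I-P_k)$. This shows that $R_k T_G$ acts boundedly from $L_2$ into $C[0,1]$, with norm $\lesssim\|G\|_\infty$. It also eliminates $\hat\phi_{k;n}-\phi_k$ from the right-hand side. As a result you only need $\|\hat\phi_{k;n}-\phi_k\|_{L_2}=O_P(n^{-1/2})$ from \eqref{eq:spectraleigenfct}, not its distributional limit. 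No delta method and no identification step are required.

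\textbf{What each buys.} Your argument is self-contained and gives a stronger conclusion: an asymptotically linear representation
$\sqrt n(\hat\phi_{k;n}-\phi_k)=\Psi_k\bigl(\sqrt n(\hgam_n-\Gamma)\bigr)+O_P(n^{-1/2})$ in sup-norm. This holds under the mere assumption $\|\hgam_n-\Gamma\|_\infty=O_P(n^{-1/2})$, and would be directly usable, e.g., for bootstrap arguments. The paper's proof is shorter because the linearization is outsourced to standard $L_2$ perturbation theory.

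\textbf{A small correction.} Your remark that Step~1 ``shows that the series converges in $C[0,1]$'' is not justified, though it is also not needed. The identity gives a continuous version of the $L_2$-sum $R_kT_G\phi_k$, depending continuously on $G$. However, the partial sums contain $\lambda_k^{-1}$ times partial sums of the $L_2$ eigen-expansion of $(I-P_k)T_G\phi_k$, which need not converge uniformly. Moreover, eigenfunctions for $\lambda_j=0$ need not even be continuous. Since only the continuous version enters your argument, nothing breaks.
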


\begin{example}[Independent, fully-observed processes]\label{ex:jainmarcus}
    In the setting of the Jain-Markus theorem, suppose that $(Z_n)_{n\geq 1}$ are i.i.d.~centered processes on $[0,1]$ having the same distribution as $Z$. Assume $\expec[Z(t)^4] < \infty$, $t \in [0,1]$, and that for some $\eta \in (0,1]$, 
    $$ |Z_i(t) - Z_i(s)| \leq M_i\, |t-s|^\eta,$$
    where the Hölder constants $M_i$ are i.i.d.~having finite forth moment, $\expec[M_i^4]<\infty$. Consider $\hat \Gamma(s,t) = \frac1n \sum_{i=1}^n Z_i(s)\, Z_i(t).$ Since $|Z_i(t)|\leq |Z_i(0)| + M_i = N_i$, we bound 
    $$ |Z_i(s)\, Z_i(t) - Z_i(s')\, Z_i(t')| \leq 2\, N_i^2\, \max(|t-t'|^\eta ,|s - s'|^\eta). $$
    Therefore, the Jain-Markus theorem applies to the bivariate process $X_i(s,t) = Z_i(s)\, Z_i(t)$ \citep[Theorem 15]{kato2019lecture} yielding \eqref{eq:weakconvcovkernel} with centered Gaussian $\mc G$ having as covariance kernel the forth-order covariance tensor
    \begin{equation}\label{eq:forthmomentoperator}
        R(x,y,s,t) \defeq \expec[Z(x)Z(y)Z(s)Z(t)]-\Gamma(x, y)\Gamma(s, t), \quad x,y,s,t \in [0,1]\,,
    \end{equation}  
    and \eqref{eq:asympcent} follows. Of course, this also follows from the asymptotic expansion in \citet[Theorem 2.1] {hall2009theory}, but the assumptions (2.7) and (2.8) (arbitrarily high moments) in their theorem seem to be more restrictive. 
    
\end{example}

\begin{example}[Cross-covariance operator and long-run covariance operator]
    Theorem \ref{th:asympnorm} generalizes to estimates $\widehat G_n(x,y)$ of continuous, symmetric kernels $G(x,y)$, $x,y \in [0,1]$, these need not be positive. For example, consider a functional time series \citep{hoervathkokoska}, that is, suppose that $(Z_n)_{n \in \mathbb Z}$ is a strictly stationary sequence of centered, square-integrable processes on $[0,1]$. The lag $h$ cross-covariance kernel is $\Gamma(x,y;h) = \expec[Z_1(x)\, Z_{1+h}(y)]$, $h \in \mathbb Z$. By stationarity, $\Gamma(x,y;-h) = \Gamma(y,x;h)$, so that the kernel $\Gamma_s(x,y;h) = \Gamma(x,y;h) + \Gamma(x,y;-h)$, $h \geq 1$, is symmetric, but need not be positive. Assume forth moments as well as Hölder-continuous paths as in Example \ref{ex:jainmarcus}, where the Hölder constants now are a stationary sequence. The symmetric lag-$h$ cross-covariance kernel $\Gamma_{s}(x,y;h)$ is estimated by
    $$\widehat{\Gamma}_{n,s}(x,y;h) = \frac1n \sum_{j=1}^{n-h} \big(Z_j(x) Z_{j+h}(y) + Z_j(y) Z_{j+h}(x) \big). $$
    Under suitable additional conditions, e.g.~mixing as in \citet[Theorem 2.1]{dette2020functional}, a functional central limit theorem can be applied to the stationary bivariate process $Y_j(x,y) = Z_j(x) Z_{j+h}(y) + Z_j(y) Z_{j+h}(x)$, yielding
    $$\sqrt n \, \big(\widehat{\Gamma}_{n,s}(\cdot,\cdot;h) - \Gamma_s(x,y;h) \big) \stackrel{D}\rightarrow \, \mc G \qquad \text{in}\quad C([0,1]^2)$$
    with a centered Gaussian process $\mc G$, from which \eqref{eq:asympcent} follows for basis functions in the spectral decomposition of $\Gamma_s(\cdot,\cdot;h)$ with eigenvalues of multiplicity one. 

    Further, under the mixing assumptions of \citet[Theorem 2.1]{dette2020functional} the long-run covariance kernel
    $$ C(x,y) = \Gamma(x,y) + \sum_{h=1}^\infty \Gamma_s(\cdot,\cdot;h)$$
    exists. If $C(x,y)$ is continuous, and if a sequence of estimates of $C(x,y)$  satisfies a limit law analogous to \eqref{eq:weakconvcovkernel}, then Theorem \ref{th:asympnorm} can be applied for estimates of the basis functions of the long-run covariance kernel associated to eigenvalues of multiplicity one.  
\end{example}

\begin{remark}[Steps in the proof of Theorem \ref{th:asympnorm}]\label{rem:proofthasympnorm}
    Here we sketch the main steps underlying the proof of Theorem \ref{th:asympnorm}. Details are given in Section \ref{sec:proofasympnorm}.  

\textit{Step 1:} Show that \eqref{eq:weakconvcovkernel} and the spectral gap assumption $\delta_k>0$ imply that
\begin{align}
\sqrt n\, &\Big( \hat \Gamma_{n} - \Gamma;\, \hat \lambda_{k;n} - \lambda_k ;\, \hat \phi_{k;n}  -  \phi_k \Big)\nonumber \\
&\qquad \stackrel{D}\rightarrow \Big(\mc G;\, \langle T_{\mc G} \phi_k,\phi_k \rangle ; \, \sum_{j \not= k} \frac{\langle T_{\mc G} \phi_k,\phi_j \rangle}{\lambda_k - \lambda_j} \, \phi_j\Big)\qquad \text{in}\quad C([0,1]^2) \times \R \times L_2[0,1] .\label{eq:weakconvassumed} 
\end{align}
This follows by using first order expansions from perturbation theory \citep[Chapter 5]{hsing2015theoretical} and the functional delta-method \citep[Section 20.2]{van2000asymptotic}. 

\textit{Step 2:} Show that the mapping
    \begin{align}
        \Phi: & \, C([0,1]^2) \times \R \times L_2[0,1]  \to C([0,1]), \nonumber\\
        \Phi(\Delta,s,f) & = \lambda_k^{-1}\, T_\Delta \phi_k - s\, \lambda_k^{-2}\, T_\Gamma \phi_k + \lambda_k^{-1}\, T_\Gamma f \label{eq:limit}
    \end{align}
    is continuous, and using the expansion \eqref{eq:basicexpansion} that 
    \begin{align}
        \sqrt n\, \big(\hat \phi_{k;n} -  \phi_k \big) & = \Phi\Big(\sqrt n\, \Big( \hat \Gamma_{n} - \Gamma;\, \hat \lambda_{k;n} - \lambda_k ;\, \hat \phi_{k;n}  - \phi_k \Big) \Big) + R_n, \label{eq:expansionasympnorm}\\
         \norminf{R_n} & = \mathbb O_{\mathbb P}\big(n^{-1/2}\big). \nonumber
    \end{align}
    \textit{Step 3:} The previous two steps imply that $\sqrt n\, \big( \hat \phi_{k;n} -  \phi_k \big)$ converges weakly in $C[0,1]$, and the limit coincides with the $L_2[0,1]$-limit from Step 1. 
\end{remark}

%

%




\section{Optimal rates for estimating eigenvalues and functional principal components}\label{sec:ratesasy}
\subsection{The model}
In this section we leverage optimal rates for covariance kernel estimation in a discrete observational model to principal component basis functions and eigenvalues.   

Let the data $(Y_{i,  j}, x_{ j})$ be distributed according to the model \citep{berger2024dense, cai2011optimal}
\begin{align}
	Y_{i,  j}= \mu( x_{ j}) + Z_i( x_{ j}) + \e_{i,  j} \,, \quad  i=1,\dotsc,n\,, \ j = 1,\ldots,p  \,, \label{eq:model}
\end{align}
where $Y_{i, j}$ are real-valued response variables and the $x_j \in \R$ are known non-random design points which are assumed to be ordered as $x_1 < \ldots < x_{p}$. The processes $Z_1,\dotsc,Z_n$ are i.i.d.~copies of a mean-zero, square integrable random process $Z$ with  continuous covariance kernel $\Gamma(x,y) = \expec[Z(x)\,Z(y)]$. The errors $\e_{i,j}$ are independent with mean zero and are also independent of the $Z_i$, and the mean function $\mu$ is unknown. The index set of $Z$ is assumed to be an interval which we take as $[0,1]$ in the following. The number of design points $p \defeq p_n$ as well as the design points $ x_{j}= x_j(p,n)$ themselves depend on the number $n$ of functions which are observed.

A function $f\colon [0,1]^2 \to \R$ is H\"older-smooth with order $\gamma>0$ 
if for all indices $ \bs s=(s_1,s_2) \in \N_0^2$ with $\abs{\bs s} = s_1 + s_2 \leq \lfloor \gamma \rfloor =\max\{  k\in\N_0 \mid k<\gamma \} \defeql k$, 
 the partial derivatives $D^{\bs s} f(v) = \partial_1^{s_1}\, \partial_2^{s_2} f(v)$, $v \in [0,1]^2$ exist and if the H\"older-norm given by
\begin{equation}\label{eq:hoeldernorm01}
\norm{f}_{\mc H, \gamma}\defeq\max_{\abs{\bs s}\leq k} \sup_{v} \abs{D^{\bs s} f(v)}+ \max_{\abs{\bs s}=k}\sup_{v\neq w} \frac{|D^{\bs s} f(v)-D^{\bs s} f(w)|}{\norm{v-w}_\infty^{\gamma-k}}
\end{equation}
is finite.
	%
%
Define the H\"older class with parameters $\gamma>0$ and $L>0$ on $[0,1]^2$  by
\begin{equation}\label{def:hoelder:class}
    \mc H(\gamma, L) = \big\{f\colon [0,1]^2 \to \R \mid \norm{f}_{\mc H, \gamma} \leq L \big\}.
\end{equation}

\subsection{Lower Bounds}

Let us start with a lower bound on estimating the eigenvalues $\lambda_k$. 
Consider the class of Gaussian processes
\begin{equation}\label{eq:Gaussianclass}
 \mc P_{\mathcal G} (\gamma;L) \defeq  \big\{ Z: [0,1] \to \R \ \text{centered Gaussian process} \mid \Gamma \in \mc H(\gamma,L)\big\}.
\end{equation} 
\begin{theorem}[Lower bounds for estimating eigenvalues]\label{theorem:optimalityeigenval}
	Assume that in model \eqref{eq:model} the errors $\e_{i,  j}$ are i.i.d.~$\mathcal N(0, \sigma_0^2)$ - distributed, $\sigma_0^2 >0$, and that the design points satisfy $x_{j+1}(p,n) - x_j(p,n) \geq (\cd\, p)^{-1}$, with the constant $\cd>0$ independent of $n$ and $p$. 

    Then for each $k$, 
         \begin{align}\label{eq:lowerboundeigenvalue}
		  \liminf\limits_{n, p \to \infty} &\inf\limits_{\hat \lambda_{k; n,p}} \, \sup_{Z \in \mc P_{\mathcal G}(\gamma;L)} \,   \cdot\, \expec\Big[   w\Big( \min(p^{\gamma},  n^{1/2})\, \big|\hat \lambda_{k; n,p} - \lambda_k \big| \Big)\Big] >0,
	\end{align}
	where the infimum is taken over all estimators $\hat \lambda_{k; n,p}$ of $\lambda_k$ and  $w(x)$ is a  non-negative weight function satisfying $w(x) \geq a\, \ind_{[b, \infty)}(x)$ for some $a,b>0$. 
\end{theorem}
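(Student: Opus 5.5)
The lower bound splits into the two regimes $n^{-1/2}$ and $p^{-\gamma}$. It suffices to prove the two separate statements with rates $n^{1/2}$ and $p^{\gamma}$. Indeed, $\min(p^\gamma,n^{1/2})\,|\hat\lambda-\lambda_k|$ dominates neither of them, but the monotonicity assumption $w\ge a\ind_{[b,\infty)}$ reduces everything to showing
\[
\liminf \inf_{\hat\lambda}\sup_Z \prob\big(|\hat\lambda-\lambda_k|\ge b\, r_{n,p}^{-1}\big)>0, \qquad r_{n,p}=\min(p^\gamma,n^{1/2}).
\]
Moreover $r_{n,p}^{-1}=\max(p^{-\gamma},n^{-1/2})$, so along any subsequence one of the two rates is the active one. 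For each rate I use a two-point (Le Cam) argument: I construct two Gaussian processes $Z^{(0)},Z^{(1)}\in\mc P_{\mathcal G}(\gamma;L)$ whose $k$-th eigenvalues differ by at least $2b\,r^{-1}$, with $b$ small, and whose induced data distributions (the $n\times p$ Gaussian observations in model \eqref{eq:model}, with $\mu=0$) have total variation distance bounded away from one. The standard reduction $\inf_{\hat\lambda}\max_{i}\prob_i(|\hat\lambda-\lambda^{(i)}|\ge b r^{-1})\ge \tfrac12(1-\mathrm{TV})$ then finishes the proof.

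\emph{Parametric regime ($n^{-1/2}$).} Take a fixed smooth kernel $\Gamma_0=\sum_{j\le K}\lambda_j\psi_j\otimes\psi_j$ with $K\ge k$, smooth orthonormal $\psi_j$, distinct $\lambda_j$, and Hölder norm strictly below $L$. Set $\Gamma_1$ equal to $\Gamma_0$ with $\lambda_k$ replaced by $\lambda_k(1+n^{-1/2})$. For $n$ large both kernels stay in $\mc H(\gamma,L)$, and $\lambda_k$ remains the $k$-th eigenvalue. The data vectors are i.i.d. $\mathcal N(0,\Sigma_i)$ in $\R^p$ with $\Sigma_i=(\Gamma_i(x_j,x_l))_{j,l}+\sigma_0^2 I$. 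The difference $\Sigma_1-\Sigma_0=\lambda_k n^{-1/2}vv^\top$ has rank one, where $v=(\psi_k(x_j))_j$. Consequently $\mathrm{KL}(\mathcal N(0,\Sigma_1)^{\otimes n}\,\|\,\mathcal N(0,\Sigma_0)^{\otimes n})$ is governed by $n\cdot (\lambda_k n^{-1/2})^2 (v^\top\Sigma_0^{-1}v)^2/2$. The quantity $v^\top\Sigma_0^{-1}v$ is bounded uniformly in $p$ (by Sherman--Morrison in the rank-$K$ structure, it is $\le 1/\lambda_k$). The KL divergence is therefore $O(1)$, and it can be made small by multiplying the perturbation by a small constant.

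\emph{Discretization regime ($p^{-\gamma}$).} Here I exploit that the data only see $\Gamma$ at the grid points. Choose $h\asymp (\cd p)^{-1}$ and a bump $g$ supported in $(-1/2,1/2)$ with $\int g>0$. Put bumps $g_h(x)=h^\gamma g((x-\tilde x_m)/h)$ centered between consecutive design points, which is possible because of the spacing condition and on a constant fraction of the intervals. Every $g_h$ then vanishes at all $x_j$. Next define $Z^{(1)}=Z^{(0)}+\xi\sum_m g_h(\cdot-\dots)$ with an independent Gaussian $\xi$, or alternatively add $\sum_m g_h\otimes g_h$-type terms. This must be done so that the perturbation vanishes on the grid while remaining positive semidefinite and Hölder. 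I take $\Gamma_1=\Gamma_0+c\,(\psi_k G)\otimes(\psi_k G)$ with $G=\sum_m g_h(\cdot)$. This is a rank-one positive perturbation vanishing on all $(x_j,x_l)$, so the observation laws coincide exactly (TV $=0$). Since $\|G\|_\infty\asymp h^\gamma$ but $G\otimes G$ has Hölder-$\gamma$ norm $O(h^{\gamma})$ times constants, it stays in the class. First-order perturbation gives $\lambda_k(\Gamma_1)-\lambda_k(\Gamma_0)\approx c\langle \psi_k G,\psi_k\rangle^2$. I will instead use a linear term $\Gamma_1=\Gamma_0+c(\psi_k\otimes \psi_kG+\psi_kG\otimes\psi_k)$, whose first-order eigenvalue change is $2c\langle\psi_k^2,G\rangle\asymp h^\gamma\asymp p^{-\gamma}$, because $G$ has mean $\asymp h^\gamma$. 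Positive semidefiniteness is preserved for small $c$ provided $\lambda_k$ is bounded away from zero and the perturbation is small in operator norm. This again requires $\psi_k G$ to lie in the span direction, which can be arranged by writing $\Gamma_1=\sum_j\lambda_j\tilde\psi_j\otimes\tilde\psi_j$ with $\tilde\psi_k=\psi_k(1+cG)$. This form makes positivity automatic and keeps agreement on the grid.

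\emph{Main obstacle.} The hardest step is this discretization construction: ensuring simultaneously exact agreement on the grid, membership in $\mc H(\gamma,L)$ (the factor $h^\gamma$ exactly compensates derivatives of order up to $\gamma$), and an eigenvalue shift of exact order $p^{-\gamma}$. The choice $\tilde\psi_k=\psi_k(1+cG)$ handles the first two. For the third, the first-order perturbation expansion with the remainder controlled by $\|\Gamma_1-\Gamma_0\|_{L_2}^2/\delta_k=O(h^{2\gamma})$ via \eqref{eq:spectraleigenvalues}-type bounds gives $\lambda_k(\Gamma_1)-\lambda_k=2c\lambda_k\int\psi_k^2G+O(h^{2\gamma})$. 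One must also choose the $\psi_k$ with $\psi_k^2$ bounded below on a set carrying a constant fraction of the bumps.
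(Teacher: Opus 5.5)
Your proposal is correct, and its core constructions are the paper's. For the $n^{-1/2}$ term you rescale the variance in the $k$-th eigendirection by a factor $1+c\,n^{-1/2}$. For the $p^{-\gamma}$ term you multiply the $k$-th eigenfunction by $1+G$, where $G$ is a sum of bumps of height $\asymp p^{-\gamma}$ placed between consecutive design points; the observation laws then coincide exactly, while $\lambda_k$ moves by $\asymp\int G\asymp p^{-\gamma}$. For $k=1$ and $\psi_1\equiv 1$ this is literally the paper's pair $W_i$ versus $W_i(1+\varphi(\cdot;p))$, whose eigenvalue is computed exactly as $\int(1+\varphi)^2$.

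The routes differ only for general $k$. The paper keeps an exact spectral decomposition. It adds $k-1$ components with eigenvalue $2$, orthogonal to the constants and, in the discretization regime, also to $\phi_{1;1,p}$, so these components depend on $p$. The $k$-th eigenvalue is then known exactly. In the parametric regime the paper transfers the $k=1$ KL bound by the data-processing inequality for convolution with a common Gaussian law. You instead keep fixed, $p$-independent $\psi_j$. You compute the Gaussian KL of the rank-one covariance update directly; your bound $v^\top\Sigma_0^{-1}v\le 1/\lambda_k$ is correct and makes this step self-contained. You then obtain the eigenvalue shift from a second-order expansion. Your version keeps the Hölder norms of the competing kernels obviously bounded uniformly in $p$, which the paper's $p$-dependent orthogonal complements would need extra work to guarantee. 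The paper's version needs no perturbation theory at all.

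Three points to tighten.

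\begin{itemize}
\item Write $\Delta=\Gamma_1-\Gamma_0$. The remainder $O(\|\Delta\|_{L_2}^2/\delta_k)=O(h^{2\gamma})$ does not follow from Weyl's inequality \eqref{eq:spectraleigenvalues}. That inequality only gives $|\lambda_k(\Gamma_1)-\lambda_k|\le\|\Delta\|_{L_2}=O(h^{\gamma})$, which is the same order as the signal. Instead use the identity $(\lambda_k(\Gamma_1)-\lambda_k)\langle\tilde\phi_k,\psi_k\rangle=\langle\tilde\phi_k,T_\Delta\psi_k\rangle$. Here $\tilde\phi_k$ is the $k$-th eigenfunction of $T_{\Gamma_1}$, signed so that $\langle\tilde\phi_k,\psi_k\rangle\ge 0$. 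Combine this identity with the sine-$\theta$ bound \eqref{eq:spectraleigenfct}.
\item Simply take $\psi_k\equiv 1$ and the other $\psi_j$ orthogonal to constants, as the paper does. Then $\int\psi_k^2G=\int G\asymp p^{-\gamma}$, wherever the design points lie.
\item Discard your intermediate constructions. The rank-one term $(\psi_kG)\otimes(\psi_kG)$ moves $\lambda_k$ only by $O(h^{2\gamma})$. The symmetric linear perturbation $c(\psi_k\otimes\psi_kG+\psi_kG\otimes\psi_k)$ of a finite-rank kernel fails to be positive semidefinite for every $c\neq 0$ as soon as $\psi_kG$ leaves the range. Only the form $\tilde\psi_k=\psi_k(1+cG)$ works.
\end{itemize}
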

The proof is provided in Section \ref{ssec:proof:optimality}. 
\begin{remark}
        The lower bound of order $n^{-1/2}$ can also be obtained  for the continuous observational model $Z_1, \ldots, Z_n$, and without additional errors.   

        The discretization error $p^{-\gamma}$ is well known from mean function and covariance kernel estimation \citep{berger2024dense, cai2011optimal, cai2010nonparametric, berger2024optimal} in the model \eqref{eq:model} with synchronous design points. While for discretely observed functions this is quite plausible, it is however remarkable that this term also arises for the constant eigenvalues.     

\end{remark}

To derive lower bounds for estimating eigenfunctions, for $k \in \N$ and $\delta >0$ we set 
\begin{align}
	  \mc P_{\mathcal G} (\gamma;k, L)  & \defeq \big\{ Z \in \mc P_{\mathcal G} (\gamma;L) \mid \delta_k \geq \delta,\ \text{ where } \delta_k \text{ is the } k^{\text{th}} \label{eq:fctclasswithspecgap}\\
    & \qquad \qquad \text{spectral gap of the covariance kernel } \Gamma_Z\big\},\nonumber 
\end{align}	
see \eqref{eq:specgapcovkern} for the definition of $\delta_k$.

Stating lower bounds in the supremum norm $\normb{\hat \phi_{k; n,p}  -  \phi_k}_\infty$ is not meaningful since the appropriate sign of $\phi_k$ cannot be determined by the data. Indeed, one could devise lower bounds which show that no consistent estimation in the sup-norm is possible without adapting the sign based on the unknown function $\phi_k$. 

Therefore, to derive lower bounds we introduce the following semimetric on $C[0,1]$, 
\begin{equation}\label{eq:semimetric}
\dd(\phi, \psi)_{\infty, s} = \min\big( \normb{\phi  -  \psi}_\infty, \normb{\phi  +  \psi}_\infty\big), \qquad \phi, \psi \in  C[0,1].
\end{equation}
Note that in \eqref{eq:supboundeigenfct}, with $\hat \phi_k = \hat \phi_{k,n}$ based on $n$ data points, consistency $\big\| \hat \phi_{k,n} - c_k\, \phi_k\big\|_\infty \to 0$ in probability as $n \to \infty$ implies that 
\begin{equation}\label{eq:coincidebound}
\prob\Big(\big\| \hat \phi_{k,n} - c_k\, \phi_k\big\|_\infty = \dd(\hat \phi_{k,n}, \phi_k)_{\infty, s}\Big) \to 1,\quad n \to \infty,
\end{equation}
so that upper bounds derived from \eqref{eq:supboundeigenfct} can also be stated in the semimetric \eqref{eq:semimetric}. 

\begin{theorem}\label{theorem:optimality}
	Assume that in model \eqref{eq:model} the errors $\e_{i,  j}$ are i.i.d.~$\mathcal N(0, \sigma_0^2)$ - distributed, $\sigma_0^2 >0$. Then setting
    \begin{equation}\label{eq:ratethe}
    a_{n,p} = \max\Big(p^{-\gamma}, \Big(\frac{\log(n\,p)}{n\,p}\Big)^{\frac \gamma{2\gamma + 1}},  n^{-1/2} \Big)
    \end{equation}
	we have that
	\begin{align}\label{eq:lowerboundeigenfunction}
		\liminf\limits_{n, p \to \infty} &\inf\limits_{\hat \phi_{k; n,p}} \, \sup_{Z \in \mc P_{\mathcal G}(\gamma; k, L)} \, \expec\, \big[ w\big(a_{n,p}^{-1} \cdot \dd(\hat \phi_{k; n,p},  \phi_k)_{\infty, s}   \big)\big] >0\,.
	\end{align}
	
	where the infimum is taken over all estimators $\hat \phi_{k; n,p}$ of $\phi_k$ and $w(x)$ is as in Theorem \ref{theorem:optimalityeigenval}. 
\end{theorem}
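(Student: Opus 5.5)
The lower bound is a maximum of three rates, so I will prove it separately for each term. Since $a_{n,p}$ is, up to constants, the largest of the three, it suffices to show for each rate $r_{n,p}\in\{p^{-\gamma},(\log(np)/(np))^{\gamma/(2\gamma+1)},n^{-1/2}\}$ that \eqref{eq:lowerboundeigenfunction} holds with $a_{n,p}$ replaced by $r_{n,p}$. Each case uses the standard reduction to testing (Le Cam's two-point method for the parametric and discretization terms, Fano/multiple-hypothesis testing in the form of Tsybakov's Theorem 2.5 for the logarithmic term). In every case I take finitely many centered Gaussian processes in $\mc P_{\mathcal G}(\gamma;k,L)$. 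They are built as finite-rank perturbations of a fixed base kernel $\Gamma_0(x,y)=\sum_{j=1}^{K}\mu_j\,\psi_j(x)\psi_j(y)$. Here $\psi_j$ are smooth orthonormal functions and $\mu_1>\dots>\mu_K>0$ have gaps at least $2\delta$, with $K\ge k+1$ and all constants small enough that $\Gamma_0$ lies strictly inside $\mc H(\gamma,L)$. The semimetric $\dd(\cdot,\cdot)_{\infty,s}$ satisfies the triangle inequality, so the reduction requires hypotheses whose $k$-th eigenfunctions are $2s$-separated in $\dd_{\infty,s}$, together with a bound on the Kullback--Leibler divergences (or $\chi^2$) of the induced Gaussian laws of $(Y_{i,j})$.

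\textbf{Parametric term $n^{-1/2}$.} I rotate in the plane spanned by $\psi_k,\psi_{k+1}$. Set $\phi_k^{(\theta)}=\cos\theta\,\psi_k+\sin\theta\,\psi_{k+1}$, with $\phi_{k+1}^{(\theta)}$ the orthogonal rotation, and take $\theta\in\{0,c n^{-1/2}\}$. Eigenvalues and gaps are unchanged, and the smoothness constraint holds uniformly. With $\psi_k,\psi_{k+1}$ chosen so that their sup-norm difference and sum are of order one, $\dd_{\infty,s}$ between the two eigenfunctions is of order $\theta$. The observations are $n$ i.i.d.\ Gaussian vectors in $\R^p$ with covariance $\Sigma_\theta+\sigma_0^2 I$, where $\Sigma_\theta=(\Gamma_\theta(x_j,x_l))_{j,l}$ has rank at most $K$. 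Because the noise variance is bounded below by $\sigma_0^2$, the per-observation KL divergence is bounded by $C\|\Sigma_\theta-\Sigma_0\|_F^2/\sigma_0^4$. I expect this to be of order $p^2\theta^2$, which would be too large. I therefore expect to need the Woodbury identity restricted to the rank-$K$ range: the data are informative only through the $K$ score-type projections, and there the noise effect is reduced by a factor $1/p$. This yields KL of order $n\theta^2$ per sample set, so $\theta\asymp n^{-1/2}$ works. It is simpler to compare with the oracle model in which the scores $\xi_{ij}$ are observed directly, giving a KL bound of $n\,C\theta^2$.

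\textbf{Discretization term $p^{-\gamma}$.} Following \citet{berger2024dense} and the proof of Theorem \ref{theorem:optimalityeigenval}, I take a function $g$ that vanishes at all design points, for instance a sum of bumps of height $p^{-\gamma}$ and width $(\cd p)^{-1}$ placed between consecutive design points. I then compare $\Gamma_0$ with the kernel obtained by replacing $\psi_k$ with $\tilde\psi_k=(\psi_k+g)/\|\psi_k+g\|_{L_2}$, Gram--Schmidt orthogonalised against the other $\psi_j$. The correction to $\tilde\psi_k$ is of order $\|g\|_{L_2}\lesssim p^{-\gamma}$ and is smooth, and to first order $\tilde\psi_k(x_j)=\psi_k(x_j)$. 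To make the two sets of observations \emph{exactly} equal in law, I make the other basis functions carry the correction as well, or equivalently adjust $\mu_k$ slightly. Alternatively, I perturb $\Gamma$ by $\mu_k(\psi_k g\otimes\cdot)$ terms that vanish on the grid squared. Then the laws of $(Y_{i,j})$ coincide while $\dd_{\infty,s}\gtrsim p^{-\gamma}$, because $\|g\|_\infty=p^{-\gamma}$ dominates both the $L_2$ normalization corrections and the sign ambiguity.

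\textbf{Logarithmic term, the main obstacle.} For $(\log(np)/(np))^{\gamma/(2\gamma+1)}$ I take $M\asymp h^{-1}$ hypotheses. Each is $\phi^{(m)}_k=$ normalization of $\psi_k+h^{\gamma}\,\beta((\cdot-t_m)/h)$, with disjoint bumps $\beta$ supported away from the supports of the other $\psi_j$, made orthogonal to them, and with $h\asymp(\log(np)/(np))^{1/(2\gamma+1)}$. Pairwise $\dd_{\infty,s}$ separation is of order $h^\gamma$. The hard step is the KL bound $\mathrm{KL}(P_m,P_0)\lesssim n\,\#\{x_j\in\mathrm{supp}\}\,h^{2\gamma}\asymp n p h^{2\gamma+1}\le c\log M$, which is the one-dimensional nonparametric-regression calculation. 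To get it I again rely on the scores structure. Conditionally on the scores, which I either reveal to the statistician or bound via Woodbury, $Y_{i,j}=\mu_k^{1/2}\xi_{ik}\phi^{(m)}_k(x_j)+\dots+\e_{i,j}$. This is $n$ independent regressions of $\phi_k$ with random Gaussian amplitudes of bounded variance, so the KL is $\sum_i\expec[\xi_{ik}^2]\sum_j(\Delta\phi(x_j))^2/(2\sigma_0^2)$. Checking that providing the scores only helps the statistician, and that the covariance-level smoothness and spectral-gap constraints survive the bump perturbation, is where most of the work lies.
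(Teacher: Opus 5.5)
Your proposal follows the paper's three-part structure. It splits $a_{n,p}$ into its three terms. For $p^{-\gamma}$ it uses two hypotheses whose processes agree in law on the grid. For the logarithmic term it uses $\asymp h^{-1}$ disjoint bump perturbations of the $k$-th eigenfunction with Tsybakov's Theorem 2.5, computing the KL divergence by conditioning on the Gaussian scores; this is just data processing, since the KL of the $Y$-marginal is at most the joint KL of $(\xi,Y)$, so it is not where the work lies. For $n^{-1/2}$ it uses a rank-two rotation.

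The differences are in implementation.
\begin{itemize}
\item The paper treats $k=1$ with rank-one hypotheses $W_i\,\phi$, where $\phi$ is a normalized perturbation of the constant $1$. It passes to general $k$ by adding $k-1$ dominating independent components and a convolution/data-processing argument. You perturb only the $k$-th component of a fixed finite-rank base $\Gamma_0$, which handles all $k$ at once.
\item The paper keeps the eigenvalue equal to $1$ and obtains exactly normalized hypotheses via Lemma \ref{lem:constrfct}. This lets it reuse the KL bounds of \citet{berger2024optimal} verbatim and cite \citet{belhakem2025minimax} for $n^{-1/2}$.
\item You let the eigenvalue absorb the normalization in the discretization term. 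In the logarithmic term the normalization error shifts $\phi$ on the whole grid by $O(h^{\gamma+1})$, which adds only $O(nph^{2\gamma+2})=o(\log M)$ to the KL, so it is harmless. You also prove the parametric bound directly.
\end{itemize}
Your route is more self-contained; the paper's is shorter.

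Three steps need repair as written.

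\textbf{Design spacing.} Theorem \ref{theorem:optimality} makes no spacing assumption on the design, so bumps of width $(\cd\, p)^{-1}$ between all consecutive design points are not available. Sup-norm separation needs only one bump. As in the paper, place it in a design-free gap of length at least $1/(p+1)$, which always exists. Similarly, in the logarithmic term $\#\{x_j\in\mathrm{supp}\}\lesssim ph$ holds only on average over the $M$ disjoint bumps. That average suffices for the averaged KL condition in Tsybakov's Theorem 2.5.

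\textbf{Exact equality on the grid.} To make the laws agree exactly on the grid, do not normalize or Gram--Schmidt. Instead take
$Z=\sqrt{\mu_k}\,\xi_k(\psi_k+g)+\sum_{j\neq k}\sqrt{\mu_j}\,\xi_j\psi_j$ with $g\perp\psi_j$ for all $j\neq k$. For example, let the $\psi_j$, $j\neq k$, vanish on $[0,1/2]$, and put the bump in a design-free subinterval of $[0,1/2]$ of length at least $1/(2(p+1))$. The $k$-th eigenpair is then $\mu_k\|\psi_k+g\|_{L_2}^2$ and $(\psi_k+g)/\|\psi_k+g\|_{L_2}$. With a single bump the normalization error is $O(p^{-\gamma-1})$, so $\dd\big((\psi_k+g)/\|\psi_k+g\|_{L_2},\psi_k\big)_{\infty,s}\gtrsim p^{-\gamma}$.

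\textbf{The parametric oracle.} The oracle must be the noiseless process. Concretely, use $(\langle Z_i,\psi_k\rangle,\langle Z_i,\psi_{k+1}\rangle)\sim\mathcal N(0,R_\theta D R_\theta^\top)$ with $D=\mathrm{diag}(\mu_k,\mu_{k+1})$; the other coordinates do not depend on $\theta$, and $Y$ arises from these coordinates through a $\theta$-free Markov kernel. This gives
\[
\mathrm{KL}=\tfrac n2\sin^2\theta\,(\mu_k-\mu_{k+1})^2/(\mu_k\mu_{k+1}).
\]
Revealing the Karhunen--Lo\`eve scores $\xi_{ij}$ alongside $Y$, as you do in the logarithmic term, would instead give KL of order $np\theta^2$. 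That yields only the rate $(np)^{-1/2}$.
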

\begin{remark}[Comments on  \eqref{eq:lowerboundeigenfunction} and relation to \citet{belhakem2025minimax}]
    Let us comment on the rates in \eqref{eq:lowerboundeigenfunction} and compare them to the results in \citet{belhakem2025minimax}, who obtain lower bounds in $L_2$. The lower bound of order $p^{-\gamma}$  is also obtained in \citet{belhakem2025minimax}. Our construction seems to be slightly simpler since only two competing hypotheses functions need to be compared. 

    The lower bound of order $(\log(n\,p)/(n\,p))^{\gamma/(2\gamma + 1)}$ is particular for the supremum norm, and does not occur for the $L_2$-norm in \citet{belhakem2025minimax}. 
    Our construction, provided in Section \ref{ssec:proof:optimality}, is based on that in \citet{berger2024optimal} for the covariance kernel, but modifications are required to obtain normalized basis functions as hypotheses.  
    
    For these two terms, rank-one alternatives suffice, but as already observed in \citet{belhakem2025minimax, mas2015high}, the lower bound of order $n^{-1/2}$ requires rank-two processes. 

\end{remark}

\vspace{2mm}

\subsection{Upper bounds }

Here we show that the lower bounds in Theorems \ref{theorem:optimalityeigenval} and \ref{theorem:optimality} are indeed optimal, by plugging the covariance kernel estimator from \citet{berger2024optimal} with its rates into \eqref{eq:spectraleigenvalues} and \eqref{eq:supboundeigenfct}. Let us briefly recall the estimator from \citet{berger2024optimal}, see also \citet{mohammadi2024functional}, together with the more general class of processes compared to the Gaussian class $
 \mc P_{\mathcal G} (\gamma;L) $ in \eqref{eq:Gaussianclass}, over which it achieves optimal rates. 
A symmetric function such as $\Gamma$ is uniquely determined by its values on the upper triangle 
\begin{align}
	T \defeq \{(x,y) \in  [0,1]^2 \mid x \leq y\}. \label{eq:upper_triangle}
\end{align}
For a sufficiently smooth function $f\colon T \to \R$ we define the Hölder norm $\norm{f}_{\mc H, \gamma;T}$ of $f$ on $T$ of order $\gamma>0$ analogously to  $\norm{\cdot}_{\mc H, \gamma}$ in \eqref{eq:hoeldernorm01}, with the suprema in the definition being only taken over $T$. 
Further, define the H\"older class $\mc H_{T}(\gamma, L)$ with parameters $\gamma>0$ and $L>0$ on $T$  analogously to \eqref{def:hoelder:class}. 
Note that for a symmetric function $f$ on $[0,1]^2$,  $f_{|T} \in \mc H_{T}(\gamma, L)$ for $\gamma \leq 1$ implies $f \in \mc H_{[0,1]^2}(\gamma, 2\,L)$, but for $\gamma > 1$, $f_{|T} \in \mc H_{T}(\gamma, L)$ only implies $f \in \mc H_{[0,1]^2}(1, 2\,L)$. Thus, a symmetric function $f \in \mc H_{[0,1]^2}(1, L)$, when restricted to $T$ can be contained in a higher-order Hölder class $\mc H_{[0,1]^2}(\gamma, \tilde L)$, $\gamma>1$. 

For the process $Z$ in \eqref{eq:model}  we relax the Gaussianity assumption and assume that $\expec[Z(0)^4] < \infty$. Further, assume that the paths are Hölder continuous of some potentially low order: there exists $0 < \beta \leq 1$ and a random variable $M = M_Z>0$ with $\expec [M^4] < \infty $ such that 
\begin{equation}\label{eq:hoeldercontpathsZ}
	\big|Z( x)-Z( y)\big| \leq M \, | x- y|^\beta, \qquad x,  y \in [0,1] \quad \text{almost surely.}
\end{equation}

%
%
Then, given $C_Z>0$ and $0 < \beta_0 \leq 1$, we consider the class of processes 
\begin{align}
	\Zclass \defeq \mc P (\gamma;L,\beta_0, C_Z) & = \big\{ Z: [0,1] \to \R \ \text{centered random process} \mid \exists \ \beta \in [\beta_0,1] \text{ and } M \text{ s.th.}\nonumber\\
	& \qquad \quad \expec [M^4] + \expec [Z( 0)^4]\leq C_Z \text{, \eqref{eq:hoeldercontpathsZ} holds} \label{eq:classprocesses} \text{ and  } \Gamma_{\mid T} \in \mc H_T(\gamma,L)\big\},
\end{align}	
where $\Gamma$ is the covariance function of $Z$. As discussed in \citet{berger2024optimal}, Theorem 1 in \citet{azmoodeh2014necessary} implies that $\mc P_{\mathcal G} (\gamma) \subseteq \mc P (\gamma)$ for appropriate constants $\beta_0$ and $C_Z$.  

\medskip

\citet{berger2024optimal} consider linear estimators for $\Gamma$ at $(x,y) \in T$ of the form \eqref{eq:forthmomentoperator}. 
%
\begin{equation}
\color{black}
    \covest xyh \defeq  \sum_{j<k}^p \wjk xyh \, z_{j,k;n} , \quad \text{where } \quad    z_{j,k;n} \defeq  \frac1{n-1}\sum_{i=1}^n\big(Y_{i,j} Y_{i,k} - \bar Y_{n, j} \bar Y_{n, k}\big)\label{eqn:estimatorCovariance}
\end{equation}
with   $\bar Y_{n,j } = n^{-1}\sum_{i=1}^n Y_{i,j}$,  $j,k \in \{1, \ldots, p\}$. The $z_{j,k;n}$ are the entries of the empirical covariance matrix; $h>0$ is a bandwidth parameter and  $\wjk xyh = w_{j,k;p}(x,y;h;x_{1},\dotsc,x_{p})$ are weights. 
 For $x^\prime, y^\prime \in [0,1]$ with $x^\prime>y^\prime$ we simply set
$\hat \Gamma_n(x^\prime,y^\prime;h) : = \hat \Gamma_n(y^\prime,x^\prime;h)$.  A special case is a restricted local polynomial estimator as discussed in \citet{berger2024optimal}.

We use the basis functions and the eigenvalues of $\covest xyh $, denoted by $\phikest xh$ and $\lamkest h$, as estimates of $\phi_k$ and $\lambda_k$. 
Now, for $k \in \N$ and $\delta >0$ we set 
\begin{align}
	\Zclassk \defeq \mc P (\gamma;k,L,\beta_0, C_Z, \delta) & = \big\{ Z \in \mc P (\gamma;L,\beta_0, C_Z) \mid \delta_k \geq \delta,\ \text{ where } \delta_k \text{ is the } k^{\text{th}} \label{eq:fctclasswithspecgap1}\\
    & \qquad \qquad \text{spectral gap of the covariance kernel } \Gamma_Z\big\},\nonumber 
\end{align}	
analogously to \eqref{eq:fctclasswithspecgap}, and observe that $\mc P_{\mathcal G} (\gamma;k, L) \subseteq \Zclassk $. 

The assumptions in the following theorem as well as the proof are given in Section \ref{sec:upperboundsproofs}.

\begin{theorem}\label{thm:rates_principlecomp_estimation}
	Consider model \eqref{eq:model} under the design Assumption  \ref{ass:design:localization} in Section \ref{sec:upperboundsproofs}. Suppose that for given $\gamma>0$ the weights in the linear estimator $\hat \Gamma_{n}(\cdot; h)$ for the covariance kernel $\Gamma$ in \eqref{eqn:estimatorCovariance}  satisfy Assumption \ref{ass:weights}, (W1) with $\zeta = \floor \gamma$, (W2) and (W3). Suppose that $0 < \beta_0 \leq 1$ and $L, C_Z>0$.
    \begin{enumerate}
     \item Then under Assumption \ref{ass:distribution}, (i) on the errors, 
    %
    %
	 setting
	$	\bar h^\star \sim \max \Big( c/p, \Big(\frac{1}{n\,p}\Big)^{\frac1{2\gamma + 1}} \Big)\,$
	we have that 
     \begin{align}\label{eq:rateofconvcovkernel1}
		 \sup_{Z \in \mc P (\gamma)} \,  \expec\big[  \sup_{k \geq 1} \big( \lamkest {\bar h^\star} - \lambda_k\big)^2\big] = \mc O\Big(\max\Big(p^{-\gamma},  n^{-1/2} \Big)\,\Big)\,.
	\end{align}
       \item If Assumption \ref{ass:distribution}, (i) and (ii) and additionally Assumption \ref{ass:weights}, (W4) holds, then 
 for $k \in \N$, $\delta>0$,  
%
    %
    %
     setting
	$	h^\star \sim \max \Big( c/p, \Big(\frac{\log(n\,p)}{n\,p}\Big)^{\frac1{2\gamma + 1}} \Big)\,$
	we have that  
     \begin{align}\label{eq:rateofconvcovkernel}
		 \normb{\phikest \cdot{h^\star} - \hat c_k\, \phi_k}_\infty = \mc O_{\prob;  \, Z \in \Zclassk}\big(a_{n,p} \big)\,.
	\end{align}
    where the rate $a_{n,p}$ is as in \eqref{eq:ratethe}, and the uniform rate for convergence in probability is defined as
    $$\lim_{C \to \infty}\, \limsup_{n,p \to \infty} \, \sup_{Z \in \Zclassk}\, \prob_{Z}\big( a_{n,p}^{-1}\, \normb{\phikest \cdot{h^\star} - \hat c_k\, \phi_k}_\infty \geq C\big) = 0.$$
    \end{enumerate}
\end{theorem}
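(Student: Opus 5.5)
The plan is to reduce both parts to known rates for the covariance kernel estimator $\hat\Gamma_n(\cdot;h)$ from \citet{berger2024optimal}, combined with the deterministic perturbation bounds \eqref{eq:spectraleigenvalues} and Proposition \ref{lem:supnormbound}. Two results from \citet{berger2024optimal} will be taken as inputs, under Assumptions \ref{ass:design:localization}, \ref{ass:weights} and \ref{ass:distribution}:
\begin{itemize}
\item an $L_2$-risk bound $\sup_{Z\in\Zclass}\expec\|\hat\Gamma_n(\cdot;\bar h^\star)-\Gamma\|_{L_2([0,1]^2)}^2=\mc O(\max(p^{-2\gamma},n^{-1}))$, where the $(np)^{-2\gamma/(2\gamma+1)}$ term is dominated by $\max(p^{-2\gamma},n^{-1})$;
\item a uniform sup-norm rate $\|\hat\Gamma_n(\cdot;h^\star)-\Gamma\|_\infty=\mc O_{\prob}(a_{n,p})$, uniformly over $\Zclass$, using (W4) and the sub-Gaussian type condition (ii).
\end{itemize}

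\textbf{Part 1.} By Weyl's inequality \eqref{eq:spectraleigenvalues} and \eqref{eq:l2operator}, $\sup_k(\hat\lambda_{k;n}-\lambda_k)^2\le \|\hat\Gamma_n-\Gamma\|_{L_2}^2$. This holds deterministically, so taking expectations and the supremum over $\Zclass$ gives the claim directly from the $L_2$-risk bound. I read the stated rate in \eqref{eq:rateofconvcovkernel1} as the squared rate $\max(p^{-2\gamma},n^{-1})$. If the $L_2$-risk result in \citet{berger2024optimal} is only stated pointwise or in sup-norm, I would integrate the pointwise MSE bounds, bias $\mc O(h^\gamma+p^{-\gamma})$ and variance $\mc O(1/n+1/(n p h)+1/(np^2h^2))$, over $[0,1]^2$. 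The $1/n$ term comes from the $Z$-part and does not improve with $h$. The choice $\bar h^\star$ then balances the remaining terms to at most $\max(p^{-2\gamma},n^{-1})$.

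\textbf{Part 2.} Let $\eta_n=\|\hat\Gamma_n(\cdot;h^\star)-\Gamma\|_\infty=\mc O_\prob(a_{n,p})$ uniformly over $\Zclassk$. I will bound the constants in \eqref{eq:supboundeigenfct} uniformly over the class.
\begin{itemize}
\item $\|\Gamma\|_\infty\le L$.
\item $\delta_k\ge\delta$ by definition of $\Zclassk$.
\item $\lambda_k\ge (k-1)\delta$ is useless for $k=1$, so use instead $\lambda_k=\lambda_k-\lambda_{k+1}+\lambda_{k+1}\ge\delta_k\ge\delta$. This holds since $\lambda_{k+1}\ge0$.
\end{itemize}
On the event $\{\eta_n\le\delta/2\}$, Weyl together with $\|\hat T-T_\Gamma\|_{L_2}\le\eta_n$ gives $\hat\lambda_k\ge\delta/2>0$. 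Hence Proposition \ref{lem:supnormbound} applies and yields $\|\hat\phi_k-c_k\phi_k\|_\infty\le C(\delta,L)\,\eta_n$. The probability of the complementary event tends to $0$ uniformly over the class, since $a_{n,p}\to0$. Combining, for any $C'$,
\[
\sup_{Z}\prob_Z\big(a_{n,p}^{-1}\|\hat\phi_k-\hat c_k\phi_k\|_\infty\ge C'\big)\le \sup_Z\prob_Z\big(\eta_n>\delta/2\big)+\sup_Z\prob_Z\big(a_{n,p}^{-1}\eta_n\ge C'/C(\delta,L)\big).
\]
Taking $\limsup_{n,p}$ and then $C'\to\infty$ gives the claim.

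\textbf{Main obstacle.} The main difficulty is the uniform sup-norm rate for the covariance estimator, including the $\log(np)$ factor and uniformity over the non-Gaussian class. The Hölder-path condition \eqref{eq:hoeldercontpathsZ} with fourth moments controls the $Z$-part of the empirical covariance, $\frac1n\sum_i Z_i(x)Z_i(y)$. This part contributes the $n^{-1/2}$ term via a maximal inequality, for instance a chaining or Jain–Marcus-type bound. Cross terms and pure error terms are handled by a union bound over a grid of size polynomial in $np$, using the tail condition (ii) and the localization of the weights (W4). If this is available from \citet{berger2024optimal}, I would simply cite it. Otherwise this step must be proved, and it is where the bulk of the technical work lies.
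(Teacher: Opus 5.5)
Your proposal is correct and follows the paper's proof. Part (i) uses Weyl's inequality \eqref{eq:spectraleigenvalues}, the Hilbert--Schmidt bound, and an $L_2$-risk bound obtained by integrating pointwise error bounds, which is what the paper does in its appendix. Part (ii) uses Proposition \ref{lem:supnormbound} with $\lambda_k\ge\delta_k\ge\delta$, $\|\Gamma\|_\infty\le L$ and the sup-norm rate of \citet[Theorem 1]{berger2024optimal}. Restricting to the event $\{\eta_n\le\delta/2\}$ to keep $\hat\lambda_k$ away from zero is slightly cleaner than the paper's appeal to part (i). Your discarded aside $\lambda_k\ge(k-1)\delta$ is false for $k\ge3$, since the class only constrains the gap at index $k$, but you do not use it.
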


\begin{remark}[On the proof]
In Theorem \ref{thm:rates_principlecomp_estimation}, (ii) follows from \eqref{eq:supboundeigenfct} and the sup-norm rates for the covariance kernel estimation in \citet{berger2024optimal}, while  (i) is obtained from \eqref{eq:spectraleigenvalues}, the bound $\big\|\hat T - T_\Gamma \big \|_{L_2} \leq \big\|\hat \Gamma - \Gamma \big \|_{L_2([0,1]^2)}$ and rates in $L_2$ for the estimator $\covest xyh$. See Section \ref{sec:upperboundsproofs} for the details. 
\end{remark}

\begin{remark}[Choice of the bandwidth]
    While the bandwidths $h^\star$ and $\bar h^\star$ can be of different order, in the dense regime in which the $\sqrt n$ can be achieved the specific choice is not required. Indeed, similary to \citet[Remark 5]{berger2024dense}, if $ (\log n)^{\zeta + \delta} \, n^{1/(2 \gamma)} \lesssim p$ for a $\delta >1$ and $\zeta >0$, then choosing $h_n \eqsim (\log n)^\delta /p$ one obtains the $\sqrt n$-rate in both \eqref{eq:rateofconvcovkernel1} and \eqref{eq:rateofconvcovkernel}. 
\end{remark}

\begin{remark}[Asymptotic normality]
    \citet[Theorem 3]{berger2024optimal} also provide a CLT in $C([0,1]^2)$ for the covariance kernel estimator $\Gamma(\cdot, h_n)$ along appropriate sequences of bandwidths $(h_n)$ in the $\sqrt n$-regime, where the limit Gaussian process has covariance function \eqref{eq:forthmomentoperator}. Therefore, Theorem \ref{th:asympnorm} applies to yield asymptotic normality in $C[0,1]$ of $\phikest \cdot {h_n}$. 
\end{remark}

\begin{remark}[FPCs of the derivative process]
Suppose that the centered, square-integrable process $(Z(x))_{x \in [0,1]}$ with covariance kernel $\Gamma$ has differentiable sample paths. \citet{dai2018} discuss the derivative process $Z'$, assumed to be square-integrable as well, and its principle component decomposition. The process $Z'$ has covariance kernel $\Gamma^{(1,1)}$, where $\Gamma^{(k,l)}(x,y) = \partial_x^k\, \partial_y^l\, \Gamma(x,y)$, $k,l \in \N_0$.  
Then, as highlighted in \citet{dai2018}, the FPCs $\phi^{\text{d}}_n$, $n \in \N_0$ associated to $Z'$ and $\Gamma^{(1,1)}$ are not equal to the derivatives $\phi_n'$ of the FPCs $\phi_n$ of $\Gamma$, and neither need the eigenvalues coincide. 

\citet{dai2018} provide rates of convergence in sup-norm of pooled local quadratic estimates \citep{zhang2016sparse}  of  $\Gamma^{(1,1)}$ and resulting estimates of FPCs $\phi^{\text{d}}_n$ in a discrete observational model with asynchronous design. 
\citet{berger2024optimal} analyze an estimator in a deterministic, synchronous design which is analogous to \eqref{eqn:estimatorCovariance} with restricted local polynomial weights $w_{j,k}^{(1,1)}(x,y;h)$ for derivative estimation. Under sufficient regularity of the sample paths and the kernel $\Gamma$, for estimating $\Gamma^{(1,1)}$ they obtain the rate
    \begin{equation}\label{eq:ratethe}
    a^{\text {d}}_{n,p} = \max\Big(p^{-\gamma+2}, \Big(\frac{\log(n\,p)}{n\,p}\Big)^{\frac {\gamma-2}{2\gamma + 1}},  n^{-1/2} \Big),
    \end{equation}
which can be transferred to the FPCs. 
\end{remark}


\section{Numerical methods, simulations and real data illustration}\label{sec:numerics}

In this section, all code implementations, numerical results, and a real-data application are available in the Github repository\footnote{github.com/KevinWilk/Functional-principal-component-estimation \href{https://github.com/KevinWilk/Functional-principal-component-estimation}{[github]}}.

\subsection{Numerical approximation of eigenfunctions and eigenvalues} \label{sec:numericapprox}

To compute eigenfunctions and eigenvalues of an estimate $G=\hgam_n$ with suitable accuracy we rely on the Nyström method \citep[Chapter 12]{kress2014integralequi}, which in turn is based on numerical integration by quadrature. 
Take $m$ quadrature points $\tilde{x}_1 < \ldots < \tilde{x}_m $ with quadrature weights $w_1 < \ldots < w_m $. 
Instead of solving $T_G \varphi = \lambda \, \varphi$, compute solutions $\varphi_m, \lambda_m$ to $T_m \varphi_m = \lambda_m \, \varphi_m$ where
$$ T_m \varphi(x) = \sum_{l=1}^m w_l\, G(x,\tilde{x}_l)\, \varphi(\tilde{x}_l).$$
To this end, form the symmetric matrix $m \times m$-matrix $S$ with
$$S_{i,l} = \sqrt{w_i}\,G(\tilde{x}_i,\tilde{x}_l)\, \sqrt{w_l}, $$
and compute an eigenvalue-eigenvector pair $\lambda_m$ and $v_m \in \R^m$ for $S$. Note that $u_m$ with $u_{m,j} = v_{m,j}/\sqrt{w_j} $ is an eigenvector for the non-symmetric Nyström matrix $K$ with $K_{i,l} = w_i\,G(\tilde{x}_i,\tilde{x}_l)\,$, however, it may be preferable numerically to work with the symmetric version $S$. Then an eigenvalue-eigenfunction pair for $T_m$ are given by $\lambda_m>0$ and $\varphi_m$ with
\begin{equation}\label{eq:nystroem}
 \varphi_m(x) = \frac{1}{\lambda_m}\sum_{l=1}^m \sqrt{w_l}\, G(x,\tilde{x}_l)\, v_{m,l}, 
 \end{equation}
which we normalize to have $L_2$-norm equal to $1$. 
In our experiments we restrict ourselves to the Nyström method with composite trapezoidal and composite Simpson rule, see \citet[Theorems 12.1 and 12.2]{kress2014integralequi}. 

Convergence analysis as the number of quadrature points $m \to \infty$ relies on the fact that for the Nyström method   the resulting family of operators is pointwise convergent to $T_G$ and collectively compact if the quadrature family is convergent \citep{zbMATH03240726, kress2014integralequi}. Then one may refer to perturbation theory for collectively compact and strongly convergent families, which is developed in \citet{zbMATH03253481}.


\subsection{Simulation results}\label{sec:sim}

As data generating process we take a stationary Ornstein–Uhlenbeck (OU) processes
\begin{align*}
    Z(x) = Z(0) e^{-\theta x}  + \sigma \int_0^x e^{-\theta(x-y)} \rd B_y,
\end{align*}
where $Z(0)\sim \mathcal{N}(0,\sigma^2/(2\theta))$  and $(B_t)_{t \geq 0}$ is a standard Brownian motion independent of $Z(0)$, and we take $\sigma=2$ and $\theta=3$. 
The covariance kernel is given by
\begin{align}
    \Gamma(x,y) =   \frac{\sigma^2}{2 \theta}  e^{-\theta|x-y|}. \label{eq:cov kernel}
\end{align}
and the FPCs and associated eigenvalues are given by \citep{Ghanem1991}
\begin{align}
    \phi_k(x) = A_k \big( \cos(\omega_k x) + \frac{\theta}{\omega_k} \sin(\omega_k x) \big) \quad \text{ and } \quad \lambda_k = \frac{\sigma^2}{\theta^2 + \omega_k^2}, \label{eq:analytic PCBF}
\end{align}
where
$$A_k = \Big( \int_0^1 \big(\cos(\omega_k y) + \frac{\theta}{\omega_k} \sin(\omega_k y)\big)^2 \rd y \Big)^{-1/2}$$
and $\omega_k>0$ is the solution of
$$(\theta^2-\omega_k^2)\sin(\omega_k) + 2 \theta \omega_k \cos(\omega_k) = 0.$$
Moreover, we use model \eqref{eq:model} with an equidistant design $x_j = j/p$, $j = 0,\hdots,p$ and standard normally distributed errors $\epsilon_{ij}$. The mean function cancels out when forming the empirical covariance matrix in \eqref{eqn:estimatorCovariance}. 


The covariance kernel is estimated using the estimate \eqref{eqn:estimatorCovariance} with the restricted local linear weights from \citet{berger2024optimal} as implemented in the \texttt{R}-package \texttt{biLocPol}\footnote{contained in Github repository github.com/mbrgr/Optimal-Rates-Covariance-Kernel-Estimation-in-FDA \href{https://github.com/mbrgr/Optimal-Rates-Covariance-Kernel-Estimation-in-FDA}{[Link]}}. Estimates for the sample size $n=500$ and $p=50$ for a particular sample, with bandwidths $0.65$ and $0.25$ together with the population kernel \eqref{eq:cov kernel} are  displayed in Figure \ref{fig: est cov} in the supplementary appendix.

For computing the associated FPCs in the Nyström method \eqref{eq:nystroem} we use $201$ equidistant quadrature points and the composite Simpson rule. Figure \ref{fig:PCB functions} shows plots for the first two estimated FPCs together with their population counterparts, and provides the estimates and true values for the eigenvalues. Specifically we use $\hat{\phi}_{1;500,201}(\cdotp;0{.}65)$ respectively $\hat{\phi}_{2;500,201}(\cdotp;0{.}25)$, where $0.65$ and $0.25$ are the bandwidths used in the covariance kernel estimate. 

These values are motivated from a simulation with $N=1000$ repetitions, in which for different number of row-wise observational points $p$, and for a grid of bandwidths the average sup-norm error is determined, see Figure \ref{fig: optimal slection} (left panel). The optimal $h^*$ is shown horizontally for each parameter $p$, the values $0.65$ and $0.25$ correspond to minimal sup-norm error for the first two basis functions for $p=50$. Note that $\phi_1$ has less variation than $\phi_2$, which makes the larger bandwidth plausible. However, the improvement for estimating  $\phi_1$ when moving from smaller bandwidths $0.25$ to $0.65$ is only moderate. The right panel in Figure \ref{fig: optimal slection} has the average sup-norm error of the covariance kernel estimate itself. The optimal values here are between $0.2$ and $0.4$, and hence are reasonable for estimating both $\phi_1$ and $\phi_2$. In practice, choosing the bandwidth for covariance kernel estimation, e.g.~using the cross validation method from \citet{berger2024optimal}, should work reasonably well.


\begin{figure}[h!]
    \centering
    \begin{subfigure}{0.49\linewidth}
    \includegraphics[width=\linewidth]{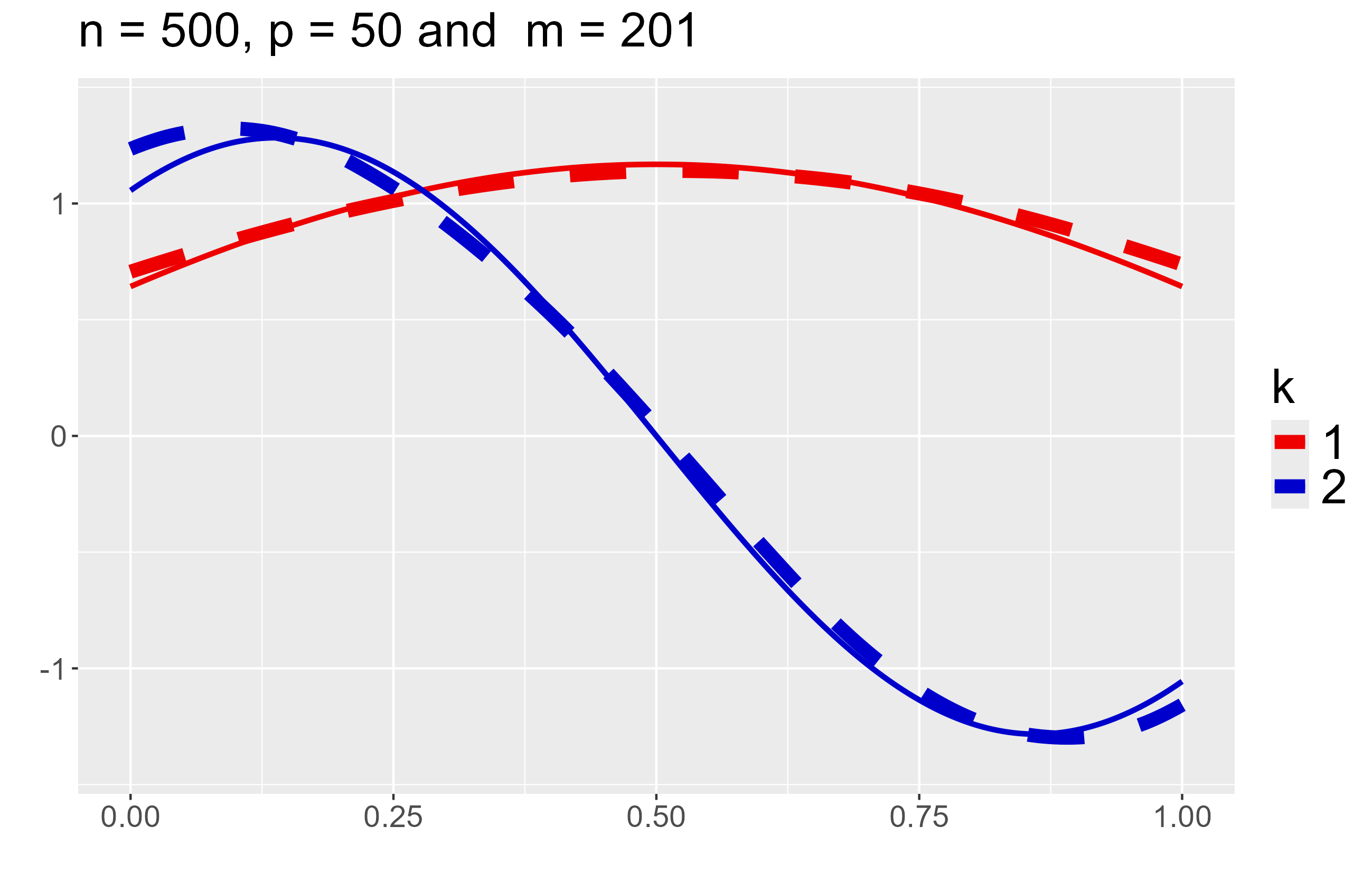}
    \end{subfigure}
    \hfill
    \raisebox{2cm}{ 
    \small
    \centering
    \begin{tabular}{c|c|cc}
                                     & $k = 1$       & $ k = 2$     &  \phantom{xxxxx}   \\  \hline 
     $\lambda_{k}$                   & $0{.}3099$    & $0{.}1433$   &     \\  
     $\hat{\lambda}_{k;500,201}(h)$    & $0{.}3175$    & $0{.}1438$ &     \\  
     $h$                             & $0{.}65$      & $0{.}25$     &
    \end{tabular}                 
    }
    \caption{Estimation of principal component basis functions and corresponding eigenvalues: analytical $\phi_k$ (solid line) and $\hat{\phi}_{1;500,201}(\cdotp;0{.}65)$ and $\hat{\phi}_{2;500,201}(\cdotp;0{.}25)$ (dashed line).}
    \label{fig:PCB functions}
\end{figure}


%
\begin{figure}[h!]
    \centering
    \begin{subfigure}{0.49\linewidth}
    \includegraphics[width=\linewidth]{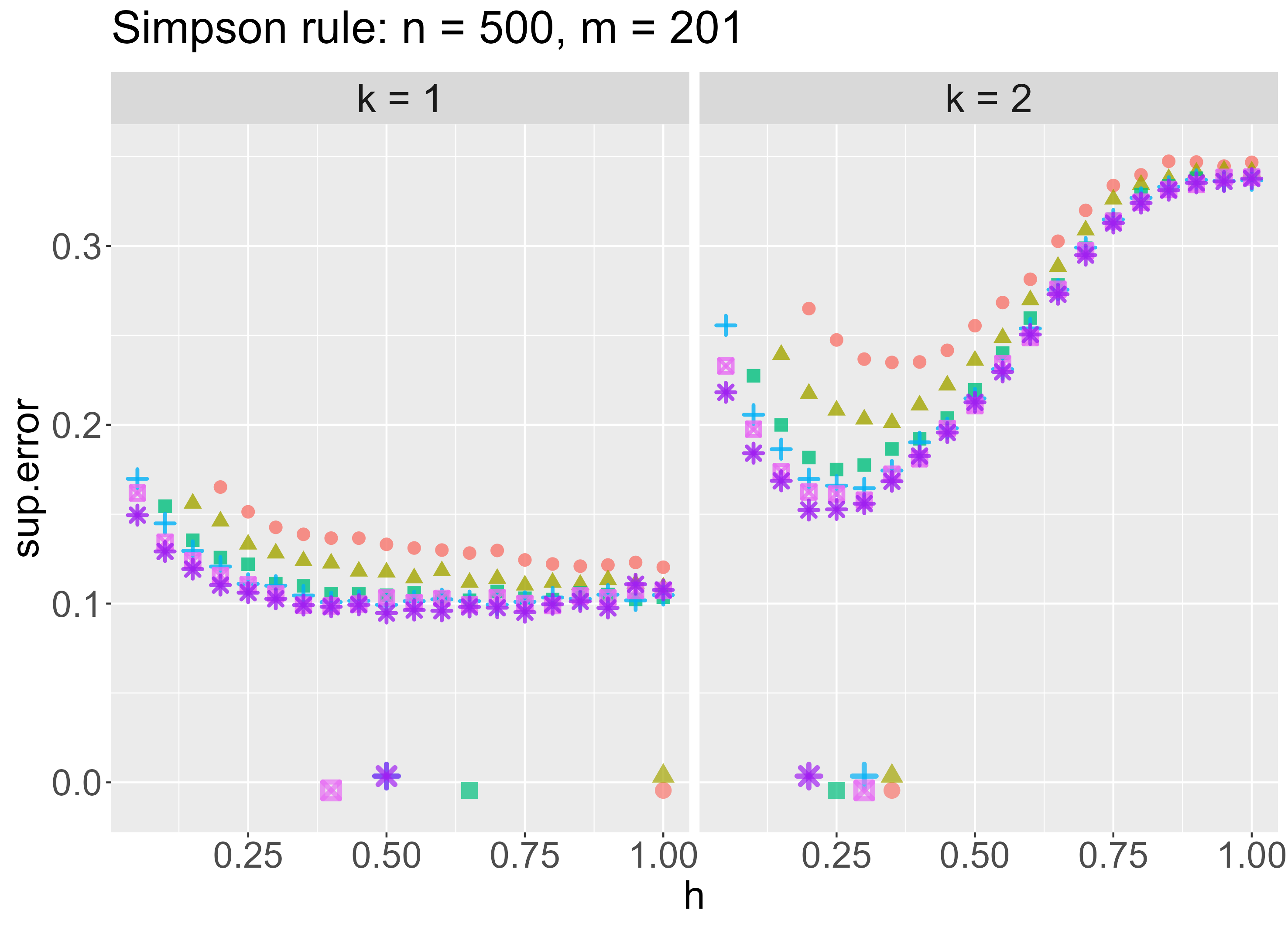}
    \caption{\small $\|\hat{\phi}_{k;500,201}(\cdotp;h) - \hat{c}_{k;201} \, \phi_k \|_{\infty}$.}
    \end{subfigure}
    \hfill
    \begin{subfigure}{0.49\linewidth}
    \includegraphics[width=\linewidth]{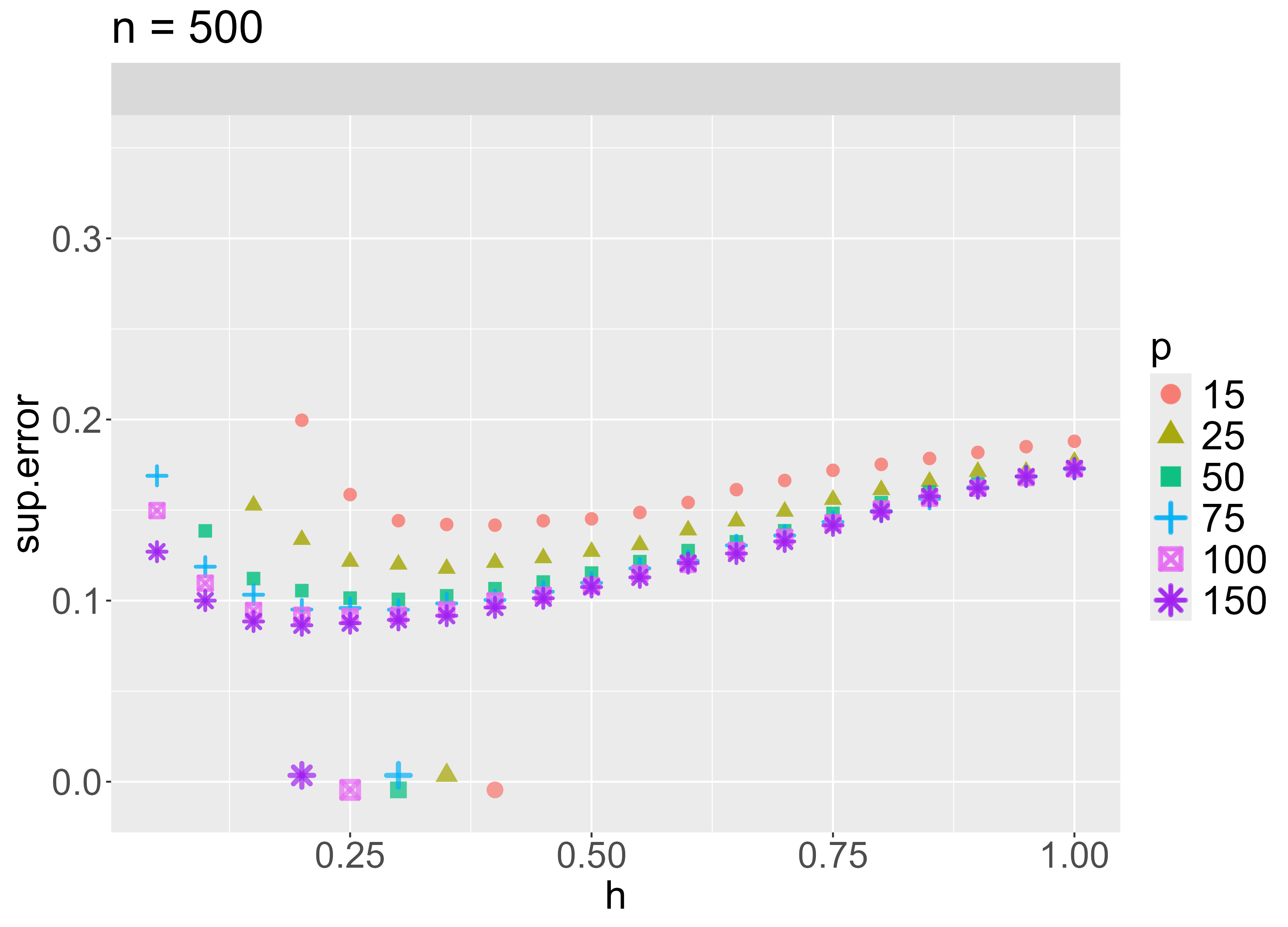}
    \caption{\small $\|\hgam_{500}(\cdotp,\cdotp;h) - \Gamma \|_{\infty}$.}
    \end{subfigure}
    \caption{Mean of supremum norm error of $N = 1{,}000$ iterations with estimator $\hgam_{500}$
of degree one for estimating the principal component basis functions \eqref{eq:analytic PCBF} and covariance kernel \eqref{eq:cov kernel} of the OU-process.}
    \label{fig: optimal slection}
\end{figure}
Finally, in Figure \ref{fig: supnorm error} we investigate the impact of $p$ for different sample sizes: For each $n \in \{50,100,200,350,500\}$ and $p \in \{15,25,50,75,100,125,150\}$ we determined the optimal $h^* = h^*(n,p;k)$ in terms of sup-norm error, tuned for each basis function $\phi_k$, $k=1,2$, and plot the resulting  mean sup-norm error $\|\hat{\phi}_{k;n,201}(\cdotp;h^*) - \hat{c}_{k;201} \, \phi_k \|_{\infty}$, and also the error of the corresponding eigenvalue estimate $|(\hat{\lambda}_{k;n,201}(h^*) - \lambda_{k})/\lambda_{k}|$. For each $n$, when increasing $p$ from $15$ to $25$ and up to $75$ the average errors  for estimating both $\phi_k$ and $\lambda_k$ first decrease at least moderately, and then stabilize for larger values of $p$. Similar results when using the composite trapezoidal rule are provided in the supplementary appendix.  

%
\begin{figure}[h!]
    \centering
    \begin{subfigure}{0.49\linewidth}
    \includegraphics[width=\linewidth]{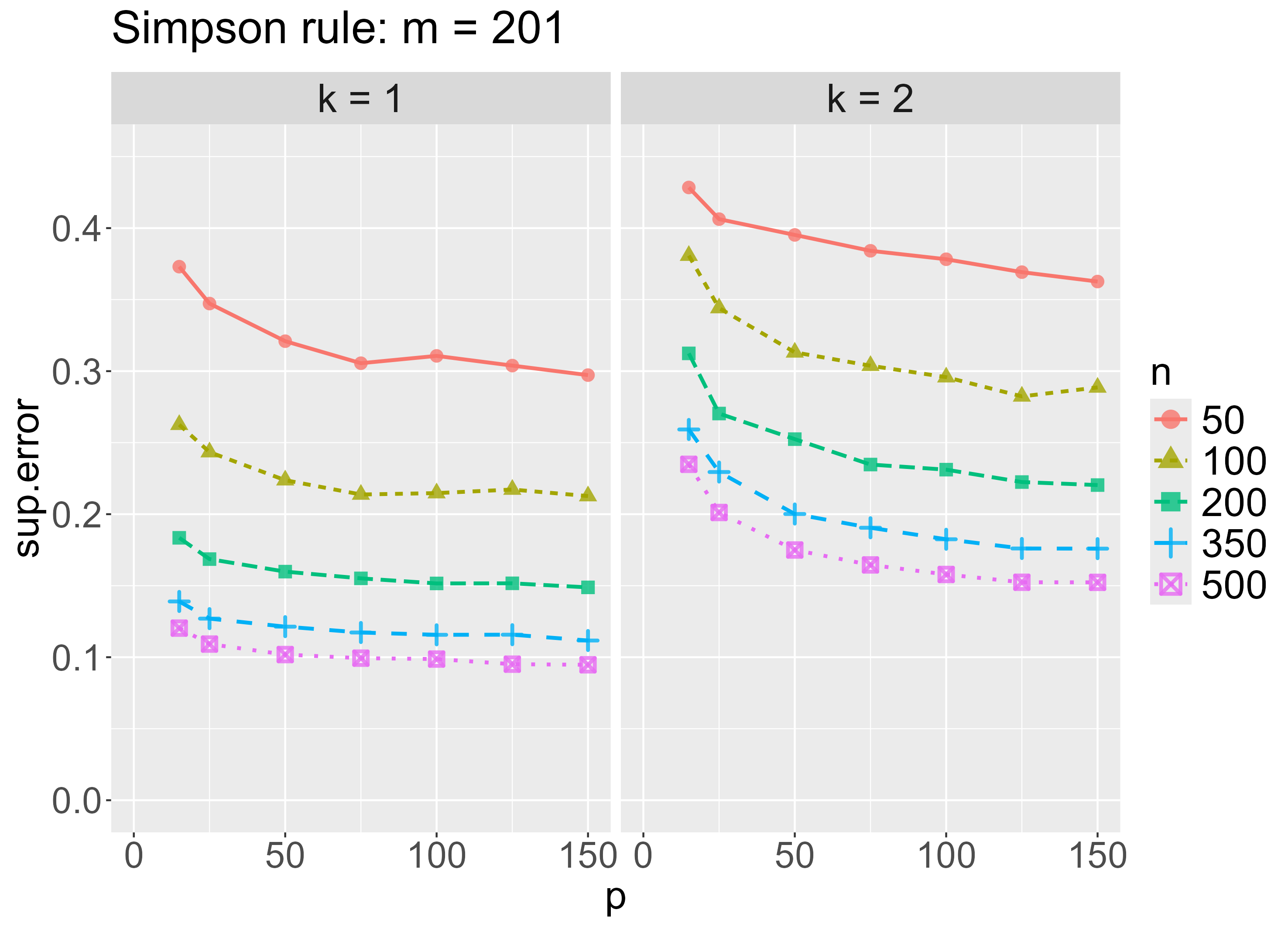}
    \caption{\small Mean of $\|\hat{\phi}_{k;n,201}(\cdotp;h^*) - \hat{c}_{k;201} \, \phi_k \|_{\infty}$.}
    \end{subfigure}
    \hfill
    \begin{subfigure}{0.49\linewidth}
    \includegraphics[width=\linewidth]{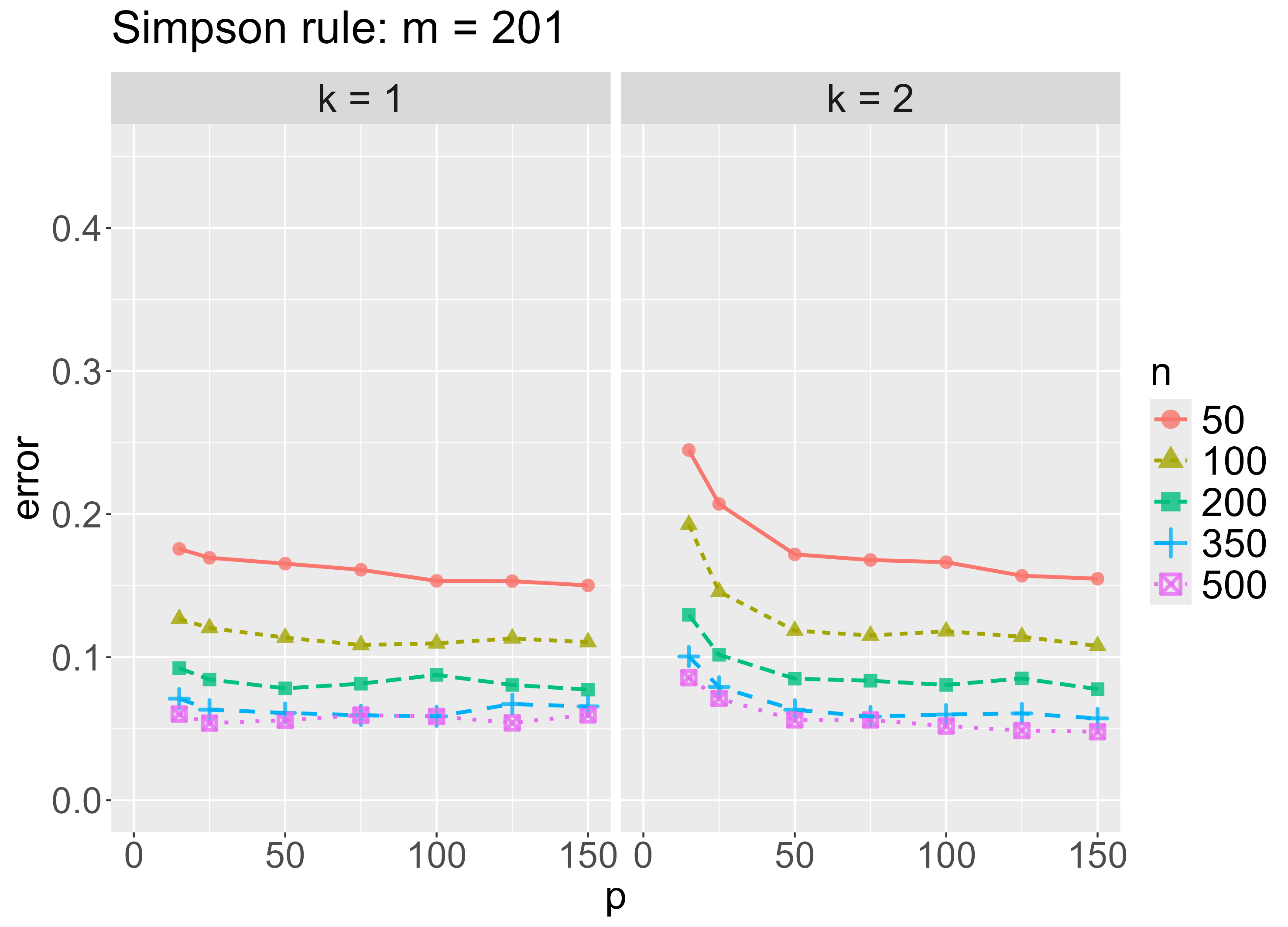}
    \caption{\small Mean of $|(\hat{\lambda}_{k;n,201}(h^*) - \lambda_{k})/\lambda_{k}|$.}
    \end{subfigure}
    \caption{Impact of increasing $p$, where for each combination of $n$ and $p$ the optimal $h^*$ is determined similar to Figures \ref{fig: optimal slection}(a) and \ref{fig: optimal slection}(b).}
    \label{fig: supnorm error}
\end{figure}

\subsection{Daily temperature curves in Berlin}\label{sec:realdata}

As an empirical illustration we revisit a dataset analyzed in \citet{wilk2026}, and consider time series of daily air temperatures for each month from $2000$ to $2025$ at a weather station in Berlin, Germany\footnote{data obtained from Deutsche Wetterdienst (DWD) at \\ \href{https://opendata.dwd.de/climate_environment/CDC/observations_germany/climate/10_minutes/air_temperature/historical/}{opendata.dwd.de/climate\_environment/CDC/observations\_germany/climate/10\_minutes/air\_temperature/historical/}}.
Thus we have $26$ segments for each of the $12$  time series covering the respective month. We focus on January and August, for which each segment has a length of $31$ days. Measurements are taken every $10\,$$\,$minutes, leading to $145$ observations  and a dense grid of observations for each day.  

We consider both the lag-0 as well as the long-run covariance kernel, and use the estimates from  \citet{wilk2026}\footnote{reproducible from the Github repository  \href{https://github.com/KevinWilk/Beyond-average-warming}{github.com/KevinWilk/Beyond-average-warming}}. From these we derive estimates of the first three basis functions, and the first six eigenvalues, using  $97$ equidistant quadrature points corresponding to $15$-minute intervals and the composite Simpson rule. The results are displayed in Figure \ref{fig:lag0app} for the lag-0 covariance kernel and in Figure \ref{fig:longrunapp} for the long-run covariance kernel. Even though the cross-covariance kernels contribute substantially to the long-run covariance kernel, the estimated FPCs of lag-0 and long-run covariance kernels are surprisingly similar, with the first FPC corresponding to a parallel shift, and the second and third FPCs corresponding to night-afternoon deviations and morning - evening deviations. For interpretability we provide the square-roots of the eigenvalue estimates, which correspond to standard deviations in the Karhunen–Loève expansion. The first eigenvalue of the parallel shift is quite dominant, in particular in January. 

\begin{figure}[h!]
    \centering
    \begin{subfigure}{0.5\linewidth}
    \includegraphics[width=\linewidth]{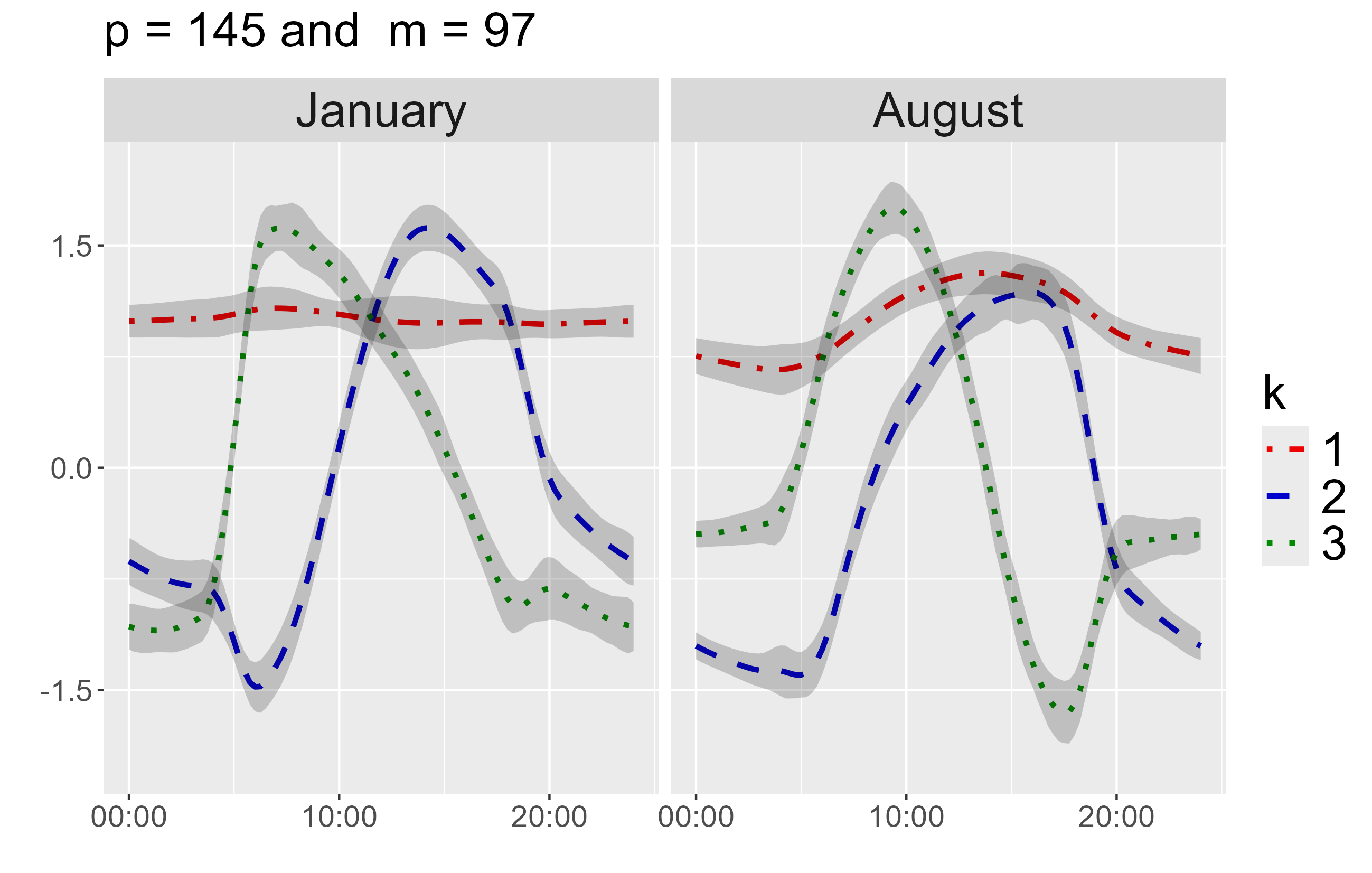}
    \end{subfigure}
    \hfill
    \raisebox{2cm}{ 
    \small
    \centering
    \begin{tabular}{c|c|cc}
                                     & January       &     August    &  \phantom{xxxxx}   \\  \hline 
     $\hat{\lambda}_{1;806,97}^{1/2}$  & $4{.}6202$   & $3{.}2232$   &     \\  
     $\hat{\lambda}_{2;806,97}^{1/2}$  & $0{.}6741$   & $0{.}9077$  &      \\
     $\hat{\lambda}_{3;806,97}^{1/2}$  & $0{.}5642$   & $0{.}576$   &      \\
     $\hat{\lambda}_{4;806,97}^{1/2}$  & $0{.}2655$   & $0{.}339$   &      \\
     $\hat{\lambda}_{5;806,97}^{1/2}$  & $0{.}2562$   & $0{.}2649$   &     \\
     $\hat{\lambda}_{6;806,97}^{1/2}$  & $0{.}1791$   & $0{.}2293$   &     \\ \hline 
     $\hat{q}_{1;0{.}95}$              & $4{.}4271$   & $4{.}4237$   &     \\
     $\hat{q}_{2;0{.}95}$              & $4{.}8036$   & $4{.}7043$   &     \\
     $\hat{q}_{3;0{.}95}$              & $5{.}036$    & $4{.}8809$   &     \\
    \end{tabular}                 
    }
    \caption{Estimation of FPCs with $95\%$ uniform confidence bands and corresponding eigenvalues of the marginal lag-0 covariance kernels.}\label{fig:lag0app}
\end{figure}
Figures \ref{fig:lag0app} and \ref{fig:longrunapp} also contain 
$95\%$ uniform confidence bands which we constructed using the dependent multiplier bootstrap, applied to the empirical covariances, with $1000$ bootstrap replicates, with the dependence length of the multipliers set to $\lfloor 2 n^{1/3}\rfloor$, see  \citet{wilk2026} for further details. We use the bootstrap-estimates of the FPC to estimate pointwise standard deviations, and then quantiles of the uniform deviation between bootstrap and sample estimate, normalized by this standard deviation. The resulting quantile estimates $\hat{q}_{k;0{.}95}, k = 1,2,3$ are also displayed in Figures \ref{fig:lag0app} and \ref{fig:longrunapp}. 

\begin{figure}[h!]
    \centering
    \begin{subfigure}{0.5\linewidth}
    \includegraphics[width=\linewidth]{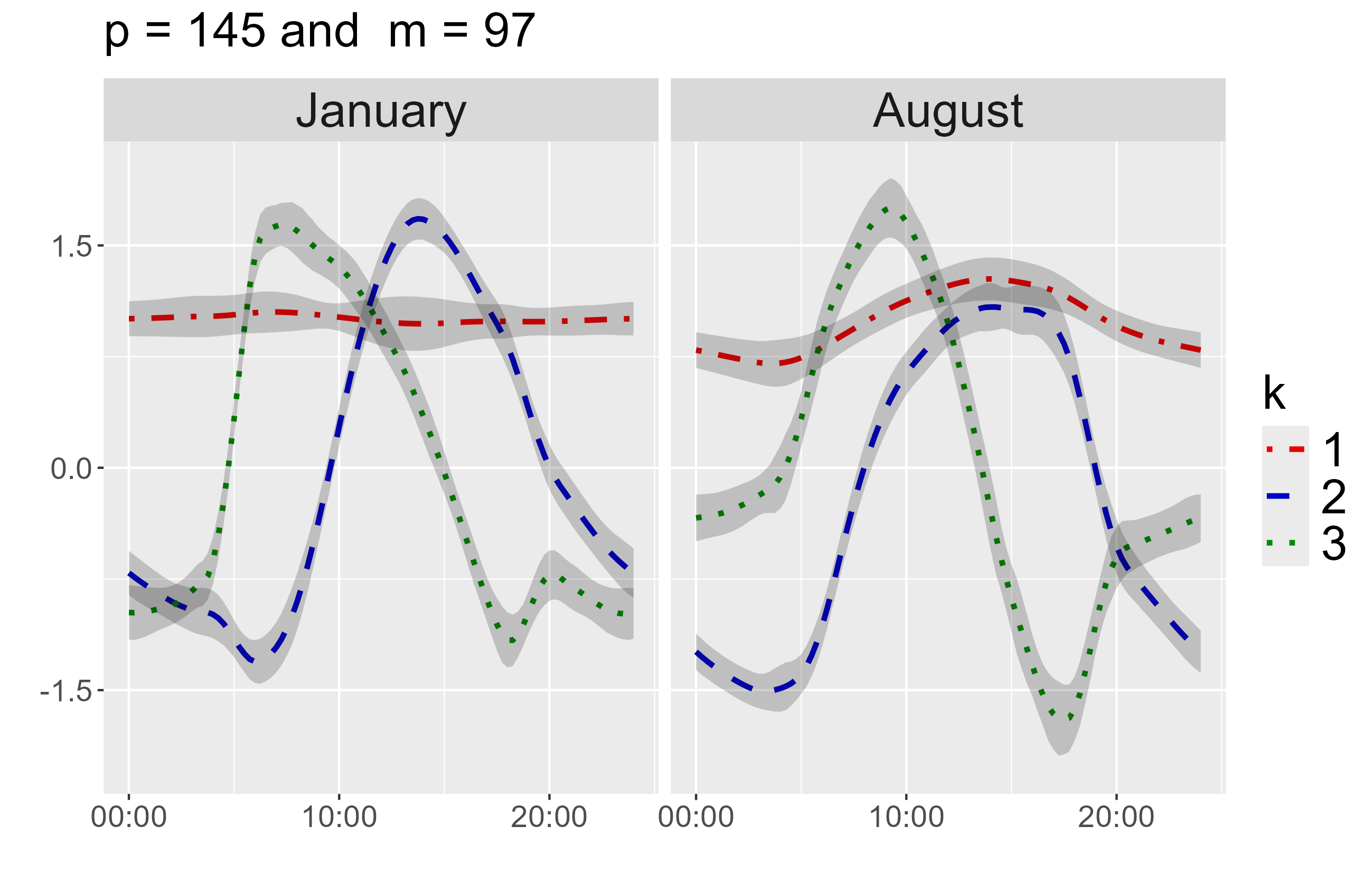}
    \end{subfigure}
    \hfill
    \raisebox{2cm}{ 
    \small
    \centering
    \begin{tabular}{c|c|cc}
                                     & January       &     August    &  \phantom{xxxxx}   \\  \hline 
     $\hat{\lambda}_{1;806,97}^{1/2}$  & $6{.}6011$   & $4{.}626$    &     \\  
     $\hat{\lambda}_{2;806,97}^{1/2}$  & $0{.}7753$    & $1{.}0464$  &     \\
     $\hat{\lambda}_{3;806,97}^{1/2}$  & $0{.}4798$   & $0{.}5621$   &     \\
     $\hat{\lambda}_{4;806,97}^{1/2}$  & $0{.}2896$   & $0{.}3726$   &     \\
     $\hat{\lambda}_{5;806,97}^{1/2}$  & $0{.}2527$   & $0{.}288$    &     \\
     $\hat{\lambda}_{6;806,97}^{1/2}$  & $0{.}1675$   & $0{.}2428$   &     \\  \hline 
     $\hat{q}_{1;0{.}95}$              & $4{.}497$    & $4{.}3571$   &     \\
     $\hat{q}_{2;0{.}95}$              & $4{.}256$    & $4{.}4015$   &     \\
     $\hat{q}_{3;0{.}95}$              & $4{.}4897$   & $5{.}6081$   &     \\
    \end{tabular}                
    }
    \caption{Estimates of the FPCs with $95\%$ uniform confidence bands and corresponding eigenvalues for the long-run covariance kernels.}\label{fig:longrunapp}
\end{figure}


\section{Proofs} \label{sec:proofs}

\subsection{Proof of Theorem \ref{th:asympnorm}}\label{sec:proofasympnorm}
\begin{proof}[Proof of Theorem \ref{th:asympnorm}]
We provide the details for the main steps of the proof as given in Remark \ref{rem:proofthasympnorm}. 

Proof of Step 1, \eqref{eq:weakconvassumed}: 
    %
   
%
The assumed weak convergence \eqref{eq:weakconvcovkernel} also holds in $L_2([0,1]^2)$. 
Consider 
\begin{equation*}
\Phi_k : L_{2,s}([0,1]^2) \to L_2[0,1], \qquad \Phi_k(G) = \phi_{k,G},
\end{equation*}
which maps a symmetric, square-integrable kernel $G \in L_{2,s}([0,1]^2)$ to an eigenfunction $\phi_{k,G}$ corresponding the $k$-th largest eigenvalue of the associated integral operator $T_G$. \citet[Theorem 5.1.8]{hsing2015theoretical} implies that $\Phi_k$ is Fréchet-differentiable (w.r.t.~the $L_2$-norms) at the covariance kernel $\Gamma$ with derivative
%
$$\big(D \Phi_k(\Gamma)\big)(\Delta) = \sum_{j \not= k} \frac{\langle T_{\Delta} \phi_k,\phi_j \rangle}{\lambda_k - \lambda_j} \, \phi_j .$$
Further, 
\begin{equation*}
\Psi_k : L_{2,s}([0,1]^2) \to \R, \qquad \Psi_k(G) = \lambda_{k,G},
\end{equation*}
which maps the kernel $G$ to its $k$-th largest eigenvalue $\lambda_{k,G}$, is Fréchet-differentiable with derivative \citep[Theorem 5.1.6]{hsing2015theoretical}
$$\big(D \Psi_k(\Gamma)\big)(\Delta) = \langle (T_{\Delta}) \phi_k,\phi_k \rangle.$$
Overall, the mapping
$$ C([0,1]^2) \to C([0,1]^2) \times L([0,1]^2) \to C([0,1]^2) \times \R \times L_2[0,1] , \qquad G \mapsto (G,G) \mapsto \big(G, \lambda_k(G), \phi_k(G)\big)$$
is Fréchet-differentiable with the appropriate derivative. The statement \eqref{eq:weakconvassumed} now follows from the functional delta-method \citep[Theorem 3.9.4]{van1996weak} and  \eqref{eq:weakconvcovkernel}.

 \bigskip

 Proof of Step 2, \eqref{eq:limit} and \eqref{eq:expansionasympnorm}:

    Concerning the continuity of the terms in $\Phi$, for $s \mapsto - s\, \lambda_k^{-2}\, T_\Gamma \phi_k$ this is clear, and for $f \mapsto \lambda_k^{-1}\, T_\Gamma f$ this follows from \eqref{eq:operatornormmixed}. Finally, for $\Delta \mapsto \lambda_k^{-1}\, T_\Delta \phi_k$, using \eqref{eq:operatornormmixed}, 
    $$  \norminf{(T_{\Delta_1} - T_{\Delta_2}) \phi_k} \leq \normLC{T_{\Delta_1} - T_{\Delta_2}} \,  \| \phi_k\|_{L_2}
 \leq \norminf{\Delta_1 - \Delta_2}.$$
 Concerning $R_n$ we write
 \begin{align*}
     R_n & = \, \frac{\sqrt n}{\hat \lambda_k}\, (\hat T_n - T_\Gamma)\, (\hat \phi_{k;n} - \phi_k) \\
     & + \,  \sqrt n \,  \Big((\hat T_n - T_\Gamma)\, (\phi_k) \, + \, T_\Gamma\, (\hat \phi_{k;n} - \phi_k)  \Big) \, \frac{  \lambda_k - \hat \lambda_k }{\hat \lambda_k \, \lambda_k}\\
     & + \, \sqrt n \,  \frac{  (\lambda_k - \hat \lambda_k)^2 }{\hat \lambda_k \, \lambda_k^2} \, T_\Gamma\,(\phi_k).
 \end{align*}
 Using $\lambda_k>0$ as well as 
 $$\normLC{ \hat T_n - T_\Gamma} = \mathbb O_{\mathbb P}\big(n^{-1/2}\big), \quad \|\hat \phi_{k;n} -  \phi_k\|_{L_2} = \mathbb O_{\mathbb P}\big(n^{-1/2}\big), \quad |\hat \lambda_k - \lambda_k| = \mathbb O_{\mathbb P}\big(n^{-1/2}\big), $$
 which follow from the assumed convergence \eqref{eq:weakconvcovkernel} of the covariance kernel together with \eqref{eq:spectraleigenvalues} and \eqref{eq:operatornormmixed}, the bound on $R_n$ follows. 
 
 \bigskip

Proof of Step 3: 

     From the results in Steps 1 and 2, together with the continuous mapping theorem and Slutsky's lemma it follows that $ \sqrt n\, (  \hat \phi_{k;n}  -  \phi_k )$ converges weakly in $C[0,1]$ to the limit 
     $$\Phi\Big(\mc G;\, \langle T_{\mc G} \phi_k,\phi_k \rangle ; \, \sum_{j \not= k} \frac{\langle T_{\mc G} \phi_k,\phi_j \rangle}{\lambda_k - \lambda_j} \, \phi_j \Big),$$
     where $\Phi$ is as in \eqref{eq:limit}. We could now show that this coincides with the limit in \eqref{eq:asympcent}. A simpler way is as follows: Weak convergence in $C[0,1]$ implies weak convergence in $L_2[0,1]$, with the same limit (interpreted as an element in $L_2[0,1]$). But this has already been determined in \eqref{eq:weakconvassumed}, so that $\sum_{j \not= k} \frac{\langle T_{\mc G} \phi_k,\phi_j \rangle}{\lambda_k - \lambda_j} \, \phi_j$ must also be the weak limit in $C[0,1]$.
\end{proof}

\subsection{Proofs of Theorems \ref{theorem:optimalityeigenval} and  \ref{theorem:optimality}}\label{ssec:proof:optimality}

\begin{proof}[Proof of Theorem \ref{theorem:optimalityeigenval}]

    First consider $k=1$. 
    
    We start with the lower bound $p^{-\gamma}$ in \eqref{eq:lowerboundeigenvalue}. 
Take $\phi_{1;0}=1$, $\lambda_{1;0}=1$, $Z_{i;0} = W_i\, \lambda_{1;0}\, \phi_{1;0} = W_i$, where $W_i \sim \mathcal{N}(0,1)$, which are taken independent over $i$.  
Further, for a constant $\tilde L>0$ and using the additional design assumption, setting $\varphi(x) = \exp(-(1-x^2)^{-1})\, 1_{|x|<1}$ we consider
    \begin{equation}\label{eq:testfct}
	\varphi(x;p) = \tilde L\, \sum_{l=1}^{p-1}\, p^{- \gamma}\,  
	\varphi\big((2\, p \, \cd)\, (x -(x_{l}+ x_{l+1})\,/\,2)\big),
    \end{equation}
   and 
     $$\lambda_{1;1,p} = \Big(\int \big(1+\varphi(\cdot;p)\big)^2 \Big)^{1/2},\qquad \phi_{1;1,p}(x) = \frac{1+\varphi(x;p)}{\lambda_{1;1,p}},$$
     and finally
     $$ Z_{i;1,p} = W_i\, \lambda_{1;1,p}\, \phi_{1;1,p}. $$
    By construction, $\int \phi_{1;1,p}^2 =1$, so that indeed $\phi_{1;1,p}$ is the principal component basis function and $\lambda_{1;1,p}^2$ the associated eigenvalue. Further, from the design assumption, $\varphi(x_j;p)=0 $ at all design points $x_j$, hence the distributions of the observations resulting from $Z_{i;0}$ and $Z_{i;1,p}$ are the same. 
    Furthermore, using the fact that the terms in \eqref{eq:testfct} have disjoint supports, we obtain 
    \begin{align*}
        \lambda_{1;1,p}^2 - 1 & = \int \big(2\,  \varphi(\cdot;p) + \varphi(\cdot;p)^2\big)\\
                & \geq  \frac1{2\, \, \cd}\, (p-1)\, \tilde L\, \, p ^{-\gamma-1}\int \varphi 
                 \,\geq \, c\, p^{-\gamma}
    \end{align*}
     for some $c>0$, as required. Finally, note that the covariance kernel $\Gamma_{1;p}(x,y) = (1+\varphi(x;p))\, (1+\varphi(y;p))$ of $Z_{i;1,p} $ is contained in $\mc H(\gamma,L)$ for sufficiently small $\tilde L$.

    \vspace{0.1cm}
    
	Next consider the lower bound of order $n^{-1/2}$. Here  we can argue as for the covariance kernel in \citet{berger2024optimal} by considering $Z_{i,0;n}(x) = W_i$ with $W_i \sim \mathcal N(0,1)$, $i=1, \ldots, n$, and $Z_{i,1;n}(x) = \sigma_{1;n}\, W_i$, $\sigma_{1;n} = 1+ n^{-1/2}$, so that the basis function is $=1$ in both cases, and the eigenvalues differ by $n^{-1/2}$ as required. For the bound on the Kullback-Leibler divergence of the observations see  \citet{berger2024optimal}.

\bigskip

Next let us extend these argument to general $k$. 

For the lower bound of order $n^{-1/2}$ choose $k-1$ continuous, normalized and pairwise orthogonal functions $\psi_1, \ldots, \psi_{k-1}$ on $[0,1]$ which are orthogonal to constants (that is, integrate to $0$).  Then for $W_{i,j} \sim \mathcal N(0,1)$, $i=1, \ldots, n$, $j=1, \ldots, k$, all independent, we set $Z_{i;0,k} = \sqrt{2}\, (W_{i,1}\, \psi_1 + \ldots + W_{i,k-1}\, \psi_{k-1}) + W_{i,k}$, and 
$Z_{i;1,n,k} = \sqrt{2}\, (W_{i,1}\, \psi_1 + \ldots + W_{i,k-1}\, \psi_{k-1}) + \sigma_{1;n}\, W_{i,k}$. Using that $Z_{i;0,k}$ and $Z_{i;1,n,k}$ have distributions which are convolutions of $Z_{i;0}$, respectively $Z_{i;1,n}$ from the above argument with the same Gaussian measure, namely the distribution of $\sqrt{2}\, (W_{i,1}\, \psi_1 + \ldots + W_{i,k-1}\, \psi_{k-1})$, and that from the data processing inequality the Kullback-Leibler divergence between two distributions only decreases if both are convolved with the same distribution, the arguments can be extended from $k=1$ to general $k$. 

For the lower bound of order $p^{-\gamma}$ we choose $k-1$ continuous, normalized and pairwise orthogonal functions $\psi_{1,p}, \ldots, \psi_{k-1,p}$ on $[0,1]$ which are orthogonal to the two-dimensional linear space $\text{span}\{1, \phi_{1;1,p}\}$ spanned by the constants and by $ \phi_{1;1,p}$.   Then for $W_{i,j} \sim \mathcal N(0,1)$, $i=1, \ldots, n$, $j=1, \ldots, k$, all independent, we set $Z_{i;0,k} = \sqrt{2}\, (W_{i,1}\, \psi_{1,p} + \ldots + W_{i,k-1}\, \psi_{k-1,p}) + W_{i,k}$, and 
$Z_{i;1,p,k} = \sqrt{2}\, (W_{i,1}\, \psi_{1,p} + \ldots + W_{i,k-1}\, \psi_{k-1,p}) + \, \lambda_{1;1,p}\, \phi_{1;1,p} W_{i,k}$. Here, the distribution at the design points $x_j$ again coincides, so that the above argument directly remains valid.

\end{proof}

For the proof of Theorem \ref{theorem:optimality} we need to modify the hypotheses functions from the proof of  \citet[Theorem 2]{berger2024optimal}.
To construct normalized basis functions with $\int \phi^2 = 1$ in the hypotheses we use the following lemma. 
\begin{lemma}\label{lem:constrfct}
\begin{enumerate}
\item    Consider $\varphi(x) = \exp(-(1-x^2)^{-1})\, 1_{|x|<1}$ for which $\varphi^\prime(x) = - \frac{2x}{(1-x^2)^2}\, \exp(-(1-x^2)^{-1})\, 1_{|x|<1}$ and let $\varphi_w(x) = w\, \varphi^\prime(x) - (1-w)\, \varphi(x)   $, $w \in [0,1]$. For all sufficiently small $h>0$ there is a $w = w(h)$ such that
        \begin{equation}\label{eq:solveint}
            \int \varphi_w \, +\, h\,\int \varphi_w^2 = 0. 
        \end{equation}   
    Morever, $\varphi_w(1/2) \leq -\exp(-4/3)$.  
    \item Given sufficiently small $h_0, h_1 >0$ and $x_0 \in (0,1)$ so that $[x_0-h_1,x_0 + h_1]\subseteq [0,1]$, choosing $w=w(h_0/2)$ as in part i) we obtain
    \begin{equation}\label{eq:normalization}
        \int_0^1 \big(1 + h_0\, \varphi_w\big((x-x_0)/h_1 \big) \big)^2\, \dx x = 1.     
    \end{equation}
\end{enumerate}
\end{lemma}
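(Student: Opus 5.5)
The proof is elementary: part i) is an intermediate value argument in the mixing parameter $w$, and part ii) follows from part i) by a change of variables. Throughout I use that $\varphi$ and $\varphi^\prime$ are smooth and supported in $[-1,1]$, and that $\int \varphi^\prime = 0$ because $\varphi$ vanishes at $\pm 1$. It follows that $\int \varphi_w = -(1-w)\int\varphi$.

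\emph{Part i).} Set
\[
F(w,h) = -(1-w)\int \varphi \,+\, h \int \varphi_w^2 .
\]
Since $\varphi_w^2 = w^2 (\varphi^\prime)^2 - 2w(1-w)\varphi\varphi^\prime + (1-w)^2\varphi^2$, the map $w\mapsto F(w,h)$ is a polynomial in $w$, hence continuous on $[0,1]$.

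At $w=0$ we get $F(0,h) = -\int\varphi + h\int\varphi^2$. This is strictly negative whenever $h < \int\varphi / \int\varphi^2$, which is how "sufficiently small $h$" is made precise. At $w=1$ we get $F(1,h) = h\int(\varphi^\prime)^2 > 0$. The intermediate value theorem then yields $w(h)\in(0,1)$ with $F(w(h),h)=0$, which is \eqref{eq:solveint}.

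For the pointwise bound, evaluate at $x=1/2$. There $1-x^2 = 3/4$, so $\varphi(1/2) = e^{-4/3}$ and $\varphi^\prime(1/2) = -\tfrac{1}{(3/4)^2}e^{-4/3} = -\tfrac{16}{9}e^{-4/3}$. Therefore
\[
\varphi_w(1/2) = -\big(\tfrac{16}{9}w + (1-w)\big)e^{-4/3} \le -e^{-4/3}
\]
for every $w\in[0,1]$, and in particular for $w=w(h)$.

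\emph{Part ii).} Take $h_0$ small enough that $h=h_0/2$ is admissible in part i), and let $w=w(h_0/2)$. Because $\varphi_w((x-x_0)/h_1)$ is supported in $[x_0-h_1,x_0+h_1]\subseteq[0,1]$, the substitution $u=(x-x_0)/h_1$ is legitimate. Expanding the square gives
\[
\int_0^1 \big(1+h_0\varphi_w((x-x_0)/h_1)\big)^2\,\dx x
= 1 + 2h_0h_1\int\varphi_w + h_0^2h_1\int\varphi_w^2 .
\]
The last two terms combine to $2h_0h_1\big(\int\varphi_w + \tfrac{h_0}{2}\int\varphi_w^2\big)$. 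This vanishes by \eqref{eq:solveint} with $h=h_0/2$, which proves \eqref{eq:normalization}.

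The only point needing care is the smallness condition on $h$ in part i). It must be uniform, namely $h<\int\varphi/\int\varphi^2$, so that the sign at $w=0$ is negative. Everything else is direct computation.
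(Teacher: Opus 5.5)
Your proof is correct and follows the paper's argument: an intermediate value argument in $w$ using the endpoint values $-\int\varphi + h\int\varphi^2<0$ and $h\int(\varphi^\prime)^2>0$, direct evaluation at $x=1/2$, and for part ii) expanding the square after the change of variables. The only differences are cosmetic: you get $\int\varphi^\prime=0$ from the boundary values instead of from oddness, and you state the smallness threshold $h<\int\varphi/\int\varphi^2$ explicitly, which the paper leaves implicit.
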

\begin{proof}[Proof of Lemma \ref{lem:constrfct}]
    For i): Since $\varphi$ is even, $\varphi^\prime$ and the product $\varphi\, \varphi^\prime$ are odd, hence integrate to zero. Thus we have $\{\int \varphi_w \mid w \in [0,1]\}= [- \int \varphi, 0] $, and 
    $$ \int \varphi_w^2 = (1-w)^2\, \int \varphi^2 + w^2 \, \int \big(\varphi^\prime\big)^2,$$
    which also depends continuously on $w$. Setting $c_1 =  \int \varphi^2$, $c_2 = \int \big(\varphi^\prime\big)^2$, we have that $\min(c_1, c_2)>0$, $\max(c_1,c_2) < \infty$ and
    $$ $$
    $$ \{\int \varphi_w \, + \, h\,\int \varphi_w^2  \mid w \in [0,1]\}\supseteq  \big[- \int \varphi  + h\, c_1,  h\, c_2\big], $$
    so that the existence of a solution to  \eqref{eq:solveint} for sufficiently small $h>0$ follows from the intermediate value theorem. 

    Further $\varphi(1/2) = \exp(-4/3)$, while $\varphi^\prime(1/2) < - \exp(-4/3)$, from which the claim is then evident.

    For ii): 
    \begin{align*}
        \int_0^1 \big(1 + h_0\, \varphi_w\big((x-x_0)/h_1 \big) \big)^2\, \dx x = 1 + \, 2\, h_0\, h_1\, \Big(\int \varphi_w + \frac{h_0}{2} \int \varphi_w^2\Big),
    \end{align*}
    from which the claim follows from i) upon the choice of $w = w(h_0/2)$. 
\end{proof}

\begin{proof}[Proof of Theorem \ref{theorem:optimality}]
	
	In the proof we rely on the reduction to hypothesis testing as presented e.g.~in \citet[section 2]{tsybakov2008introduction}.
	For  all hypothesis models we take $\mu=0$. The arguments are inspired by those from \citet{berger2024optimal}, but modifications are required.    

    We present the proofs for $k=1$, extensions to general $k$ can be derived as in the proof of Theorem  \ref{theorem:optimalityeigenval}. 
    
    First consider the lower bound $  p^{-\gamma}$. We construct the hypotheses at appropriate distance so that the observations are equal at all design points.  To this end we take $\phi_{1;0}=1$, $Z_{i;0} = W_i\, \phi_{1;0} = W_i$, where $W_i \sim \mathcal{N}(0,1)$, which are taken independent over $i$.  
    For the competing hypotheses, set $x_0 = 0, x_{p+1}=1$. There is an $l \in \{0,1,\ldots, p\}$ for which $x_{l+1}- x_{l}\geq (p+1)^{-1}$, choose such an $l$. 
   For a constant $\tilde L>0$ to be specified we then set
	$$ \phi_{1;1,p}  = 1 + \tilde L\, p^{-\gamma}\, \varphi_w\big( 2\, (p+1)\, (x -(x_{l}+ x_{l+1})\,/\,2)\big),$$
    where $\varphi_w$ and $w = w(\tilde L\, p^{-\gamma}/2)$ are chosen according to Lemma \ref{lem:constrfct}. Using Lemma \ref{lem:constrfct}, ii), $\int_0^1 \phi_{1;1,p}^2=1$, and we set 	$ Z_{i;1,p}(  x)= W_i\, \phi_{1;1,p}(x).$ Since $\phi_{1;1,p}(x_j)=1$ at all design points $x_j$ and hence $Z_{i;1,p}(  x_j)= W_i$, so that the distributions under both hypotheses coincide. 
    For sufficiently large $p$,  
    $$ \dd\big(\phi_{1;1,p}, \phi_{1;0}\big)_{\infty,s} \geq \tilde L\, p^{-\gamma}\, |\varphi_w(1/2)| \geq \tilde L\,  \exp(-4/3) \, p^{-\gamma}, $$
    as required. Finally, the covariance kernel of $Z_{i;1,p}$, $\Gamma_{1,p}(x,y) = \phi_{1;1,p}(x) \, \phi_{1;1,p}(y)$ is contained in 
    $\mc H_T(\gamma,L)$ for sufficiently small $\tilde L$. 
    \vspace{0.3cm}

    Next let us turn to the lower bound of order $(\log(n\, {p})/(n{  p}))^{\gamma/(2\gamma+1)}$. Here we can use the construction from \citet{berger2024optimal}. Setting $N_{n,  p}= \ceil{c_0\,(np/(\log(n {p})))^{1/(2\gamma+1)}}$ for $c_0>0$ sufficiently small, $ h_{n,  p} = N_{n,  p}^{-1}$ and $z_l = (l-1/2)/N_{n,  p}$,  
    we only need to replace the hypothesis functions defined in \citet{berger2024optimal} as
    $$ 1 +  \tilde L (h_{n,  p}\,/\,2)^{\gamma} \varphi\big(2\, (  x -  z_l)/h_{n,  p}\big),$$
    where $\varphi$ is as in Lemma \ref{lem:constrfct}, by 
    $$ 1 +  \tilde L (h_{n,  p}\,/\,2)^{\gamma} \varphi_w\big(2\, (  x -  z_l)/h_{n,  p}\big),$$
    with $w = w(\tilde L (h_{n,  p}\,/\,2)^{\gamma}/2)$ in Lemma \ref{lem:constrfct} to guarantee that these are normalized basis functions. The rest of the argument remains the same by noting that for sufficiently large $np$, the sup-norm distances in \citet{berger2024optimal} between the hypothesis functions coincide with the semimetric $\dd()_{\infty,s}$.    

      \vspace{0.3cm}
    
    Finally, the lower bound of order $n^{-1/2}$ can be obtained for $k=1$ as in \citet[Proposition 2]{belhakem2025minimax}, even for continuously observed processes without noise. The extension to general $k$ is obtained as in the proof of Theorem \ref{theorem:optimalityeigenval}. 

\end{proof}		

\subsection{Assumptions \ref{ass:weights} - \ref{ass:design:localization} and Proof of Theorem \ref{thm:rates_principlecomp_estimation}}\label{sec:upperboundsproofs}

%
%
%
%

To derive the upper bounds consider the following assumptions on the weights of the linear estimator in \eqref{eqn:estimatorCovariance}, on the distribution of the errors and on the design. 

\begin{assumption}\label{ass:weights} 
	There is a $c>0$ and a $h_0>0$ such that for  sufficiently large $p$, for the weights $\wjk xyh$ the following holds for all $h \in (c/p_, h_0]$ for constants $\Cmax, \Clip>0$ which are independent of $n, p,h$ and $(x, y) \in T$.
	\begin{enumerate}[label=\normalfont{(W\arabic*)},leftmargin=9.9mm]
		\item The weights reproduce polynomials of a degree $\zeta \geq 0$, that is for $(x,y)  \in T$,
		\begin{align*} 
			\sum_{ j<k}^{ p}  \wjk xyh =1\,, \quad \sum_{j<k}^{ p}( x_{ j}- x)^{r_1}(x_k - y)^{r_2} \,  \wjk xyh = 0\,, 
		\end{align*} \label{ass:weights:polynom}
        for $r_1,r_2 \in \N_0$ s.t. {\color{black} $1 \leq $} $r_1 + r_2 \leq \zeta$.
		\item We have $\wjk xyh = 0$ if $\max(\abs{x_j-x}, \abs{x_k - y})> h$ with $(x,y) \in T$. \label{ass:weights:vanish}
		%
		%
		\item For the absolute values of the weights it holds $  \max_{j<k} \big| \wjk xyh\big|  \leq \Cmax (p\,h)^{-2},\,  x \leq y.$  \label{ass:weights:sup}
		\item For a Lipschitz constant $\Clip > 0$ it holds that
		\begin{align*}
			\absb{  \wjk xyh -  \wjk{x^\prime}{y^\prime}h} \leq \frac{\Clip}{( p\, h)^2} \bigg(\frac{\max(\abs{x-x^\prime}, \abs{x-y^\prime})}h \wedge 1\bigg), \ \;x\leq y,\, x^\prime\leq y^\prime \,.
		\end{align*} \label{ass:weights:lipschitz}
        %

	\end{enumerate}		
\end{assumption}

\begin{assumption}[Sub-Gaussian errors] \label{ass:distribution}
		The random variables $\{\epsilon_{i, j} \mid 1 \leq i \leq n,\,  1 \le  j \le  p\}$ are centered, independent and independent of the processes $Z_1,\dotsc,Z_n$. Further we assume 
        \begin{enumerate}
        \item[(i)] $ \sigma_{ij}^2 \defeq \expec[\epsilon_{i, j}^2]$ we have that $\sigma^2 \defeq \sup_n \max_{ i,j} \sigma_{ij}^2 < \infty$,
        \item[(ii)] in addition to (i), the distribution of $\epsilon_{i, j}$ is sub-Gaussian, and there exists {\color{black}$\kappa\geq 1$} such that {\color{black}$\kappa^2\sigma_{i, j}^2$} is an upper bound for the squared sub-Gaussian norm of $\epsilon_{i, j}$.
        \end{enumerate} 
\end{assumption}

\begin{assumption}[Design Assumption] \label{ass:design:localization}
	There is a constant $\Ccard > 0$ such that for each $x \in T$ and $h>0$ we have that
	\begin{align*}
		\card \big\{ j \in \{1, \ldots, p \}\mid x_j \in [x-h, x+h]\big\} & \leq \Ccard \,p\,h\,.
	\end{align*}
\end{assumption}

\begin{lemma}[Rates for covariance estimation]\label{thm:rates_covkernrates}
	Consider the setting and the assumptions from Theorem \ref{thm:rates_principlecomp_estimation}. 
    \begin{enumerate}
     \item Then under Assumption \ref{ass:distribution}, (i) on the errors, 
 \begin{align*}
		\sup_{h \in (c/p, h_0]}\, \sup_{Z \in \mc P (\gamma;L,\beta_0, C_Z)} \bar a_{n,p,h}^{-2}\,  \expec\big[  \big\|\hat \Gamma_n(\cdot;h) - \Gamma \big\|_{L_2}^2\big] = \mc O\big(1\big)\,,
	\end{align*}
	where 
	\begin{align}\label{eq:upperboundwithbandwitdh1}
		\bar a_{n,p,h} & = \max \Big( h^\gamma, \Big( \frac{1}{n\,p\,h}\Big)^{1/2}, n^{-1/2}\Big)\,.
	\end{align}
	 Hence setting
	$
		\bar h^\star \sim \max \Big( c/p, \Big(\frac{1}{n\,p}\Big)^{\frac1{2\gamma + 1}} \Big)\,
	$
	we obtain that 
     \begin{align}\label{eq:rateofconvcovkernel1}
		 \sup_{Z \in \mc P (\gamma)} \,  \expec\big[  \big\|\hat \Gamma_n(\cdot;\bar h^\star) - \Gamma \big\|_{L_2}^2\big] = \mc O\Big(\max\Big(p^{-\gamma},  n^{-1/2} \Big)\,\Big)\,.
	\end{align}
       \item If Assumption \ref{ass:distribution}, (i) and (ii) and additionally Assumption \ref{ass:weights}, (W4) holds, then 
 \begin{align*}
		\sup_{h \in (c/p, h_0]}\, \sup_{Z \in \mc P (\gamma;L,\beta_0, C_Z)}  a_{n,p,h}^{-1}\,  \expec\big[  \big\|\hat \Gamma_n(\cdot;h) - \Gamma \big\|_{\infty}\big] = \mc O\big(1\big)\,,
	\end{align*}
	where 
	\begin{align}\label{eq:upperboundwithbandwitdh}
		a_{n,p,h} & = \max \Big( h^\gamma, \Big( \frac{\log(h^{-1})}{n\,p\,h}\Big)^{1/2}, n^{-1/2}\Big)\,.
	\end{align}
	Hence 
     setting
	$	h^\star \sim \max \Big( c/p, \Big(\frac{\log(n\,p)}{n\,p}\Big)^{\frac1{2\gamma + 1}} \Big)\,$
	we obtain
     \begin{align}\label{eq:rateofconvcovkernel}
		\sup_{Z \in \mc P (\gamma;L,\beta_0, C_Z)}  \,   \expec\big[  \big\|\hat \Gamma_n(\cdot; h^\star) - \Gamma \big\|_{\infty}\big] = \mc O\Big(\max\Big(p^{-\gamma}, \Big(\frac{\log(n\,p)}{n\,p}\Big)^{\frac \gamma{2\gamma + 1}},  n^{-1/2} \Big)\,\Big)\,.
	\end{align}
    \end{enumerate}
\end{lemma}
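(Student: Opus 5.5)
We plan to prove Lemma~\ref{thm:rates_covkernrates} with a bias--variance decomposition of the linear estimator \eqref{eqn:estimatorCovariance}, following \citet{berger2024optimal}; part ii) is essentially their sup-norm theorem. Write $Y_{i,j} = \mu(x_j) + Z_i(x_j) + \e_{i,j}$. The mean cancels in $z_{j,k;n}$. For $j<k$ (so $j \neq k$) the errors contribute no diagonal term $\sigma_{jj}^2$, and hence $\expec[z_{j,k;n}] = \Gamma(x_j,x_k)$. This gives
\[
\hat \Gamma_n(x,y;h) - \Gamma(x,y) = \underbrace{\sum_{j<k} w_{j,k}(x,y;h)\big(\Gamma(x_j,x_k) - \Gamma(x,y)\big)}_{B(x,y)} + \underbrace{\sum_{j<k} w_{j,k}(x,y;h)\big(z_{j,k;n} - \Gamma(x_j,x_k)\big)}_{V(x,y)},
\]
where the first equality uses $\sum_{j<k} w_{j,k}=1$ from (W1). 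We need only treat $(x,y)\in T$, by symmetry.

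\emph{Bias.} Taylor-expand $\Gamma$ on $T$ to order $\zeta=\floor\gamma$ around $(x,y)$. By (W1) all polynomial terms of degree $1,\dots,\zeta$ vanish. By (W2) only pairs with $\max(|x_j-x|,|x_k-y|)\le h$ contribute, and the Hölder remainder is bounded by $L h^\gamma$. Assumption~\ref{ass:design:localization} gives at most $(\Ccard p h)^2$ such pairs, and (W3) bounds each weight by $\Cmax (ph)^{-2}$. Together these give $\|B\|_\infty \lesssim h^\gamma$, uniformly over the class. One subtlety: the Taylor expansion must stay inside $T$. Since $j<k$ implies $x_j<x_k$, the relevant segment lies in the convex set $T$, so the expansion is valid there.

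\emph{Variance.} Decompose $z_{j,k;n}-\Gamma(x_j,x_k)$ into three parts, up to the $\bar Y$ correction, which is of smaller order:
\begin{enumerate}
\item the process part $\frac1n\sum_i (Z_i(x_j)Z_i(x_k)-\Gamma(x_j,x_k))$;
\item the cross terms $\frac1n\sum_i (Z_i(x_j)\e_{i,k}+Z_i(x_k)\e_{i,j})$;
\item the pure noise part $\frac1n\sum_i \e_{i,j}\e_{i,k}$.
\end{enumerate}

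For part i) of the lemma, compute $\expec V(x,y)^2$ pointwise and integrate over $[0,1]^2$.
\begin{itemize}
\item The process part has variance $\lesssim n^{-1}$, using fourth moments of $Z(0)$ and $M$. The weights have $\ell_1$-norm $O(1)$, so Minkowski's inequality gives a contribution of order $n^{-1}$.
\item The noise part is uncorrelated across distinct pairs $(j,k)$, each term having variance $\le\sigma^4/n$. Hence its contribution is at most $\frac{\sigma^4}{n}\sum_{j<k} w_{j,k}^2\lesssim \frac1n (ph)^2(ph)^{-4}=(np h)^{-2}\cdot n$. Written more carefully this is $\frac1{n(ph)^2}\le \frac1{nph}$ since $ph\ge c$.
\item The cross terms give $\frac1n\sum_k\big(\sum_j w_{j,k}Z(x_j)\big)^2$-type bounds of order $\frac{1}{nph}$, again using (W3) and design localization.
\end{itemize}
This yields $\bar a_{n,p,h}^2$. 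Plugging in $\bar h^\star$ balances $h^{2\gamma}$ against $(nph)^{-1}$, and $h\ge c/p$ yields the $p^{-\gamma}$ term.

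For part ii) we need $\expec\sup_{(x,y)\in T}|V(x,y)|$. The process part is bounded deterministically-in-$(x,y)$ via a CLT-scale argument. Using the Hölder paths \eqref{eq:hoeldercontpathsZ}, rewrite it as $\sum w_{j,k}(x,y)\,\frac1n\sum_i X_i(x_j,x_k)$ with $X_i(s,t)=Z_i(s)Z_i(t)-\Gamma(s,t)$, and bound by $\expec\sup_{s,t}|\frac1n\sum_i X_i(s,t)|\lesssim n^{-1/2}$ through a chaining or Jain--Marcus bound (the same argument as in Example~\ref{ex:jainmarcus}). The noise and cross parts are handled as follows.
\begin{itemize}
\item Conditionally on $Z$ they are sub-Gaussian, respectively sub-exponential (Hanson--Wright), processes in $(x,y)$.
\item Discretize $T$ on a grid of mesh $h/(np)^{a}$. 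Use (W4) to control the increments off the grid; this is precisely where the Lipschitz weights enter.
\item Take a union bound over the polynomially many grid points, which costs the factor $\sqrt{\log(h^{-1})}$, or equivalently $\log(np)$.
\end{itemize}

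The main obstacle is this maximal inequality for the quadratic-form noise term $\sum w_{j,k}\frac1n\sum_i\e_{i,j}\e_{i,k}$. Its tails are only sub-exponential, so we will apply Hanson--Wright with Frobenius norm $\lesssim (n^{1/2}ph)^{-1}$ and operator norm smaller. We must then check that the sub-exponential part stays of lower order in the regime $h\ge c/p$, in which $\log(np)/(nph)$ is dominated by its square root.

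Finally, choosing $h^\star$ balances $h^\gamma$ against $(\log(np)/(nph))^{1/2}$, which gives \eqref{eq:rateofconvcovkernel}.
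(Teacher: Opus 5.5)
Your proposal is correct and follows the paper's route. Part (i) is proved in the paper's appendix by exactly this decomposition into bias, pure-noise, process and cross terms, with pointwise second-moment bounds from (W2), (W3) and Assumption \ref{ass:design:localization} integrated via Fubini (the paper also explicitly bounds the $i\neq l$ terms from the $\bar Y$ correction, which you only assert are of lower order). Part (ii) is simply quoted from \citet[Theorem 1]{berger2024optimal}, whose argument you sketch.

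One small caveat on that sketch. A union bound over a grid of mesh $h/(np)^a$ costs $\sqrt{\log(np)}$ rather than $\sqrt{\log(h^{-1})}$. Also, the sub-exponential Hanson--Wright term is negligible only when $nph \gtrsim \log(np)$, not for all $h \geq c/p$. So your argument yields \eqref{eq:rateofconvcovkernel} at $h = h^\star$, but not literally the uniform-in-$h$ bound with $a_{n,p,h}$ as stated.
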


\begin{proof}[Proof of Lemma \ref{thm:rates_covkernrates}.]
    Part (ii) is \citet[Theorem 1]{berger2024optimal}. The $L_2$-rate is derived similarly and actually more easily, we provide the computations in the supplementary appendix, Section \ref{sec:appendix1}. 
\end{proof}

\begin{proof}[Proof of Theorem \ref{thm:rates_principlecomp_estimation}]

(i) This follows from the perturbation bound \eqref{eq:spectraleigenvalues} and the upper bounds of covariance kernel estimation in $L_2$ in the above Lemma \ref{thm:rates_covkernrates}, part (i). 

\smallskip

(ii) From (i) we obtain in particular that $\lamkest h - \lambda_k =  o_{\prob;  \, Z \in \Zclass}(1)$. 
Observe that for $Z \in \Zclassk$, $\lambda_k \geq \delta_k \geq \delta >0$, and therefore $|\lamkest h|^{-1} = \mc O_{\prob;  \, Z \in \Zclassk}(1)$.  
Hence the upper bound follows from \eqref{eq:spectraleigenvalues} and the upper bounds of the covariance kernel estimate in the supremum norm in Lemma \ref{thm:rates_covkernrates}, part (ii).

\end{proof}

\section*{Acknowledgements}

 HH and KW gratefully acknowledge financial support from the DFG, grant HO 3260/9-1.

\newpage

\appendix

\section{Results on covariance kernel estimation}\label{sec:appendix1}

For the following Lemma consider the error decomposition for the covariance estimator $\hat \Gamma_n(x,y;h)$ under Assumption \ref{ass:design:localization} in \citet[Lemma 7]{berger2024optimal}. The following Lemma yields upper bounds for the rate of convergence in $L_2$ norm. 

\begin{lemma}[Upper bounds in the $L_2$-norm]
	Given  $0 < \beta_0 \leq 1$ and $\gamma, L, C_Z>0$, for the estimator $\covest \cdot\cdot h$ in \eqref{eqn:estimatorCovariance} with $n$ and $p$ large enough the following rates of convergence hold, where we abbreviate $\Zclass = \mc P (\gamma;L,\beta_0, C_Z)$.
	\begin{enumerate}
		\item[i).] If the weights satisfy \ref{ass:weights:polynom} with $\zeta = \floor{\gamma}$, \ref{ass:weights:vanish} and \ref{ass:weights:sup} of Assumption \ref{ass:weights}, then
		\begin{align*}
		\sup_{h \in (\nicefrac cp, h_0]}\, \sup_{Z \in \Zclass }\, h^{-2\gamma} \int_{[0,1]^2} \absb{\sum_{j<k}^p \wjk xyh \big(\Gamma(x_j, x_k) - \Gamma(x,y)\big)}^2 \dx(x,y)= \mc O\big( 1\big)\,,
		\end{align*}
	with constants $h_0, c >0 $ according to Assumption \ref{ass:weights}.
		\item[ii).] If the weights satisfy \ref{ass:weights:vanish}, \ref{ass:weights:sup} and \ref{ass:weights:lipschitz}, then we have that
		\begin{align*}
			\expec\Big[ \int_{T}\absb{ \sum_{j<k}^p \wjk xyh \frac1n \sum_{i=1}^n \epsilon_{i,j} \epsilon_{i,k} }^2\dx(x,y) \Big] &= \mc O \bigg(  {\frac{1}{n\,(p\,h)^2}}\bigg)\,,\\
			\expec\Big[\int_T\absb{ \sum_{j<k}^p \wjk xyh \sum_{i \neq l}^n \frac{\epsilon_{i,j} \epsilon_{l,k}}{n(n-1)} }^2\dx(x,y) \Big] &= \mc O \bigg( \frac{ 1}{(n\,p\,h)^2}\bigg)\,,
		\end{align*}
        where the constants in the $\mc O$ terms can chosen uniformly for $h \in (c/p, h_0]$.
		\item[iii).] If the weights satisfy \ref{ass:weights:vanish} and \ref{ass:weights:sup}, then
		\begin{align*}
			\sup_{h \in (\nicefrac cp, h_0]} \sup_{Z \in \Zclass}  & \expec\Big[ \int_T \absb{ \frac1{n} \sum_{j<k}^p \wjk xyh \sum_{i=1}^n \big(Z_i(x_j)Z_i(x_k) - \Gamma(x_j, x_k)\big) }^2\dx(x,y) \Big] =\mc O\bigg( \frac1n\bigg),\\
			\sup_{h \in (\nicefrac cp, h_0]}\sup_{Z \in \Zclass}  & \expec\Big[ \int_T \absb{\frac1n\sum_{j<k}^p \wjk xyh \sum_{i\neq l=1}^n  \frac{Z_i(x_j)Z_l(x_k)}{n-1}} ^2\dx(x,y)\Big] =\mc O\bigg(\frac1{n^2}\bigg)\,.
		\end{align*}
		\item[iv).] If the weights satisfy \ref{ass:weights:vanish}, \ref{ass:weights:sup} and \ref{ass:weights:lipschitz}, then
		\begin{align*}
			 \sup_{Z \in \Zclass}\expec\Big[ \int_T\absb{ \sum_{j<k}^p \wjk xyh \frac1n \sum_{i=1}^n Z_i(x_j) \epsilon_{i,k} }^2\dx(x,y) \Big] &= \mc O \bigg(\frac{1}{n\,p\,h}\bigg)\,,\\
			 \sup_{Z \in \Zclass}\expec\Big[\int_T \absb{ \sum_{j<k}^p \wjk xyh \sum_{i \neq l}^n \frac{Z_i(x_j) \epsilon_{l,k}}{n(n-1)} }^2\dx(x,y) \Big] &= \mc O \bigg(\frac{1}{n\,p\,h}\bigg)\,,
		\end{align*}
	where the constants in the $\mc O$ terms can chosen uniformly for $h \in (c/p, h_0]$.
	\end{enumerate}
\end{lemma}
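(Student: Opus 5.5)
The plan is to handle the four parts one at a time. In each, the $L_2$-norm squared over $T$ (or $[0,1]^2$) is bounded by the supremum over $(x,y)$ of the pointwise second moment, since $T$ has measure at most one. For every fixed $(x,y)$ the pointwise bounds come from the weight assumptions (W2) and (W3) together with the design Assumption \ref{ass:design:localization}. The key counting fact is this: by (W2), the number of pairs $(j,k)$ with $\wjk xyh \neq 0$ is at most $(\Ccard\, p\,h)^2$, and for fixed $j$ the number of $k$ with nonzero weight is at most $\Ccard\, p\,h$. Combined with (W3), this gives $\sum_{j<k} |\wjk xyh| \leq \Cmax \Ccard^2$, $\sum_{j<k} \wjk xyh^2 \leq \Cmax^2\Ccard^2 (p\,h)^{-2}$, and $\sum_j \big(\sum_k |\wjk xyh|\big)^2 \lesssim (p\,h)^{-1}$. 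The Lipschitz condition (W4) is not actually needed for $L_2$ bounds; it was used in \citet{berger2024optimal} only for the chaining in sup-norm. I will therefore just not use it even where it is listed.

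\textbf{Part i) (bias).} Use (W1) with $\zeta=\floor\gamma$. Taylor-expand $\Gamma(x_j,x_k)$ around $(x,y)$ to order $\zeta$ with Hölder remainder. This needs care: $\Gamma$ is only Hölder on $T$, but $(x,y)\in T$ and $j<k$ give $x_j<x_k$, so both points lie in $T$ and the segment between them stays in the convex set $T$. The polynomial terms vanish by reproduction. The remainder is bounded by $L\,\max(|x_j-x|,|x_k-y|)^{\gamma}\leq L h^\gamma$ on the support from (W2). Summing against $|\wjk xyh|$ with $\sum|w|\lesssim 1$ gives a pointwise bound $\lesssim h^\gamma$, uniformly over the class. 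Squaring and integrating then gives $\mathcal O(h^{2\gamma})$.

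\textbf{Parts ii)–iv) (variance terms).} Here I compute second moments directly and exploit independence.
\begin{itemize}
\item First term of ii): the summands $\sum_{j<k}w\,\epsilon_{i,j}\epsilon_{i,k}$ are independent and centered over $i$. For $j<k$ and $j'<k'$, $\expec[\epsilon_{i,j}\epsilon_{i,k}\epsilon_{i,j'}\epsilon_{i,k'}]$ is nonzero only if $(j,k)=(j',k')$. This gives $n^{-1}\sigma^4\sum w^2 \lesssim (n(p h)^2)^{-1}$.
\item Second term of ii): $\expec[\epsilon_{i,j}\epsilon_{l,k}\epsilon_{i',j'}\epsilon_{l',k'}]$ with $i\neq l$ is nonzero only for $(i',j')=(i,j)$, $(l',k')=(l,k)$, or the crossed pairing with $i'=l$, $j'=k$, which is impossible under $j<k$, $j'<k'$ up to symmetric terms. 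This gives $n^2/(n^2(n-1)^2)\,\sigma^4\sum w^2$.
\item Part iii), first term: the i.i.d. centered variables $\sum w\,(Z_i(x_j)Z_i(x_k)-\Gamma)$ have variance at most $\expec[(\sum|w|\,\|Z\|_\infty^2)^2]\lesssim \expec\|Z\|_\infty^4\leq 8(\expec Z(0)^4+\expec M^4)\leq 8C_Z$. This yields $\mathcal O(1/n)$.
\item Part iii), second term: uncorrelatedness across distinct pairs $(i,l)$ reduces it to $n^{-2}(n-1)^{-2}\cdot n(n-1)\cdot\expec[(\sum|w|\,\|Z_i\|_\infty\|Z_l\|_\infty)^2]\lesssim n^{-2}$.
\item Part iv), first term: independence of $\epsilon$ from $Z$ and across $k$ gives $n^{-1}\expec\sum_k \sigma^2\big(\sum_j w\, Z(x_j)\big)^2 \leq n^{-1}\sigma^2\,\expec\|Z\|_\infty^2\sum_k\big(\sum_j|w_{j,k}|\big)^2 \lesssim (n p h)^{-1}$, using the counting bound with the roles of $j,k$ swapped.
\item Part iv), second term: this is handled the same way and is even smaller.
\end{itemize}

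The main obstacle I expect is the bookkeeping in these fourth-moment computations. In particular, I need to verify that the only nonvanishing index pairings in the $\epsilon\epsilon$ U-statistic terms give $\sum w^2$ rather than $(\sum|w|)^2$, since that is the difference between the rate $(n(ph)^2)^{-1}$ and a rate that is too weak. I also need to confirm that the constants depend only on $\Cmax,\Ccard,\sigma,C_Z,L$, so that the bounds are uniform in $h\in(c/p,h_0]$ and $Z\in\Zclass$.
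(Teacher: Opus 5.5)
Your proposal is correct and follows essentially the same route as the paper. Both bound pointwise second moments using (W2), (W3) and Assumption~\ref{ass:design:localization}, then integrate via Fubini; part i) is the pointwise reproduction/Taylor bias bound the paper imports from \citet[Lemma 8]{berger2024optimal}, part iii) is controlled by the envelope $\|Z\|_\infty \leq |Z(0)| + M$, and you are right that (W4) plays no role. The only slip is a constant in the second term of iii): $U_{il} = \sum_{j<k} \wjk xyh\, Z_i(x_j) Z_l(x_k)$ and $U_{li}$ are correlated, because $\expec[Z_i(x_j) Z_i(x_{k'})] = \Gamma(x_j, x_{k'})$ need not vanish (unlike the $\epsilon$-case), so only unordered pairs $\{i,l\}$ are uncorrelated and your count $n(n-1)$ should be $2n(n-1)$ (or use Cauchy--Schwarz), which leaves the $\mathcal O(n^{-2})$ rate unchanged.
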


\begin{proof}
    \textit{i).} Since the upper bounds in \citet[Lemma 8]{berger2024optimal} hold point-wise for all $(x,y)\in T$ the upper bound follows directly. \\

    \textit{ii).} With the properties of the weights and Fubini for non-negative functions we obtain
    \begin{align*}
        \expec\bigg[ \int_T \bigg( \sum_{j <k}^p \wjk xyh \frac1n \sum_{i = 1}^n \epsilon_{i,j} \epsilon_{i,k} \bigg)^2 \dx (x,y) \bigg]  &= \frac{1}{n^2}\sum_{i = 1}^n \int_{T} \sum_{j <k }^pw_{j,k}^2(x,y;h)  \expec \big[\epsilon_{i,j}^2 \epsilon_{i,k}^2 \big] \dx (x,y)\\
         \leq  \frac1{n} \frac{\Ccard\,\Cmax}{(p\,h)^2}\expec \big[ \epsilon_{1,1}^2\epsilon_{1,2}^2\big]  &= \frac{\Cmax\,\Ccard\,\sigma^4}{n\,(p\,h)^2} = \mc O \bigg( \frac1{n\,(p\,h)^2}\bigg)\,.
    \end{align*} 

    \textit{iii).} By the proof of \citet[Lemma 8 iii), (31)]{berger2024optimal} it is sufficient to derive the bound of
    \begin{align*}
        \expec\bigg[ \int_T \bigg( \frac{1}{n}\sum_{i = 1}^n \big(Z_i(x)Z_i(y) - \Gamma(x,y)\big)^2 \dx (x,y)  \bigg]\,.
    \end{align*}
    Now let $X_{n,i}(x,y) \defeq (Z_i(x)Z_i(y)-\Gamma(x,y))/\sqrt{n}$, which has the envelope $\Phi_{n,i} = (2Z_i^2(0) + 2M_i + L)/\sqrt n$. Now, again by Fubini and the definition of an envelope inserting the point-wise bound yields 
    \begin{align*}
        n\, \expec\bigg[\int_T \Big( \frac{1}{\sqrt n} \sum_{i = 1}^n X_{n,i}(x,y) \Big)^2 \,\dx(x,y)\bigg] & = \sum_{i = 1}^n\int_T  \expec \big[ X_{n,i}^2(x,y) \big]\dx(x,y) \leq \sum_{i = 1}^n \expec\big[ \Phi_{n,i}^2 \big] = \mc O(1)\,.
    \end{align*}

    \textit{iv).} Using Fubini and the properties of the weights
    \begin{align*}
        \expec\bigg[ &\int_T \Big( \sum_{j<k}^p \wjk xyh \frac1n \sum_{i = 1}^n Z_i(x_j)\epsilon_{j,k}\Big)^2\dx(x,y) \bigg] \\
        & = \frac1{n^2} \sum_{i = 1}^n \sum_{k = 2}^p \sum_{j,j^\prime = 1}^{k-1} \int \expec \big[ \wjk xyh w_{j^\prime, k}(x,y;h) Z_i(x_j)Z_i(x_{j^\prime}) \epsilon_{i,k}^2 \big]\dx(x,y)  \\
        & \leq \frac1{n^2} \sum_{i = 1}^n \int \expec \big[ \sum_{k = 2}^p \sum_{j = 1}^{k-1}\abs{\wjk xyh} \sum_{j^\prime = 1}^{k-1} \abs{w_{j^\prime, k}(x,y;h)} \abs{Z_i(x_j)}\abs{Z_i(x_{j^\prime})} \epsilon_{i,k}^2 \big]\dx(x,y)  \\
        & \leq \frac{\Cmax^2\,\Ccard^3}{n\,p\,h} \expec\big[(\abs{Z_1(0)} + M_1)^2 \,\epsilon_{1,1}^2 \big] = \mc O \bigg(\frac1{n\,p\,h}\bigg)\,.
    \end{align*}
\end{proof}

\section{Additional numerical results}\label{sec:trapez}

\begin{figure}[h!]
    \centering
    \begin{subfigure}{0.4\linewidth}
    \includegraphics[width=\linewidth]{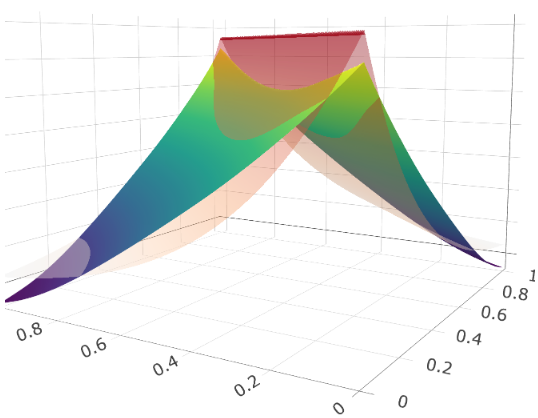}
    \caption{\small \href{https://kevinwilk.github.io/Functional-principal-component-estimation/estimation_covh065.html}{$\hat{\Gamma}_{500,50}(\cdotp,\cdotp;0{.65})$}.}
    \end{subfigure}
    \hfill
    \begin{subfigure}{0.4\linewidth}
    \includegraphics[width=\linewidth]{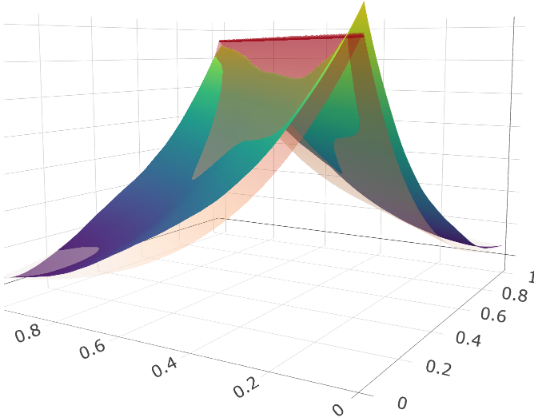}
    \caption{\small \href{https://kevinwilk.github.io/Functional-principal-component-estimation/estimation_covh025.html}{$\hat{\Gamma}_{500,50}(\cdotp,\cdotp;0{.25})$}.}
    \end{subfigure}
    \caption{Covariance estimation (blue–yellow colored area) and covariance kernel (red colored area) of the OU-process \eqref{eq:cov kernel}.}
    \label{fig: est cov}
\end{figure}

\begin{figure}[h!]
    \centering
    \begin{subfigure}{0.49\linewidth}
    \includegraphics[width=\linewidth]{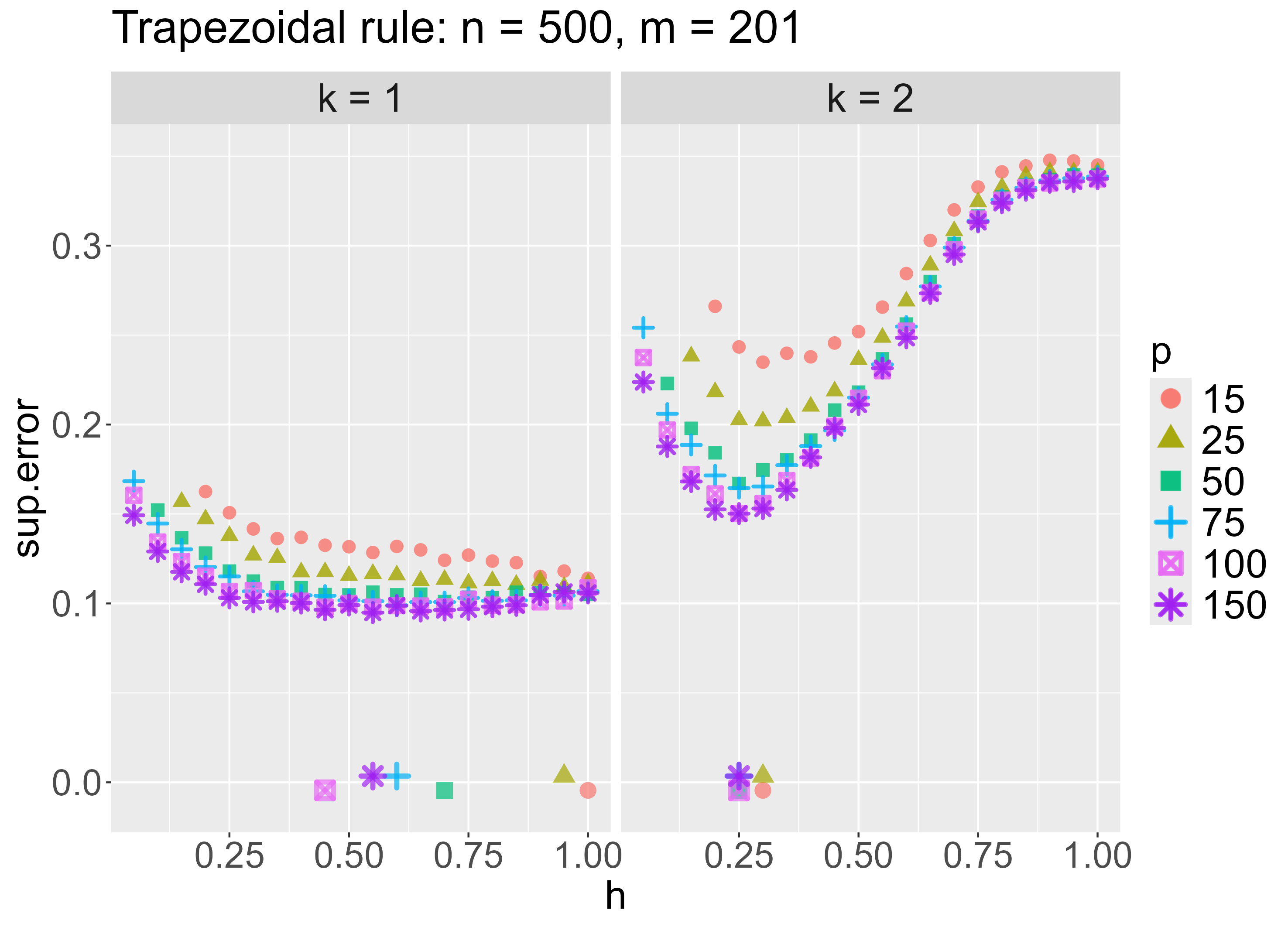}
    \end{subfigure}
    \caption{Analogous to Figure \ref{fig: optimal slection}(a) by using the trapezoidal rule.}
\end{figure}

\begin{figure}[h!]
    \centering
    \begin{subfigure}{0.49\linewidth}
    \includegraphics[width=\linewidth]{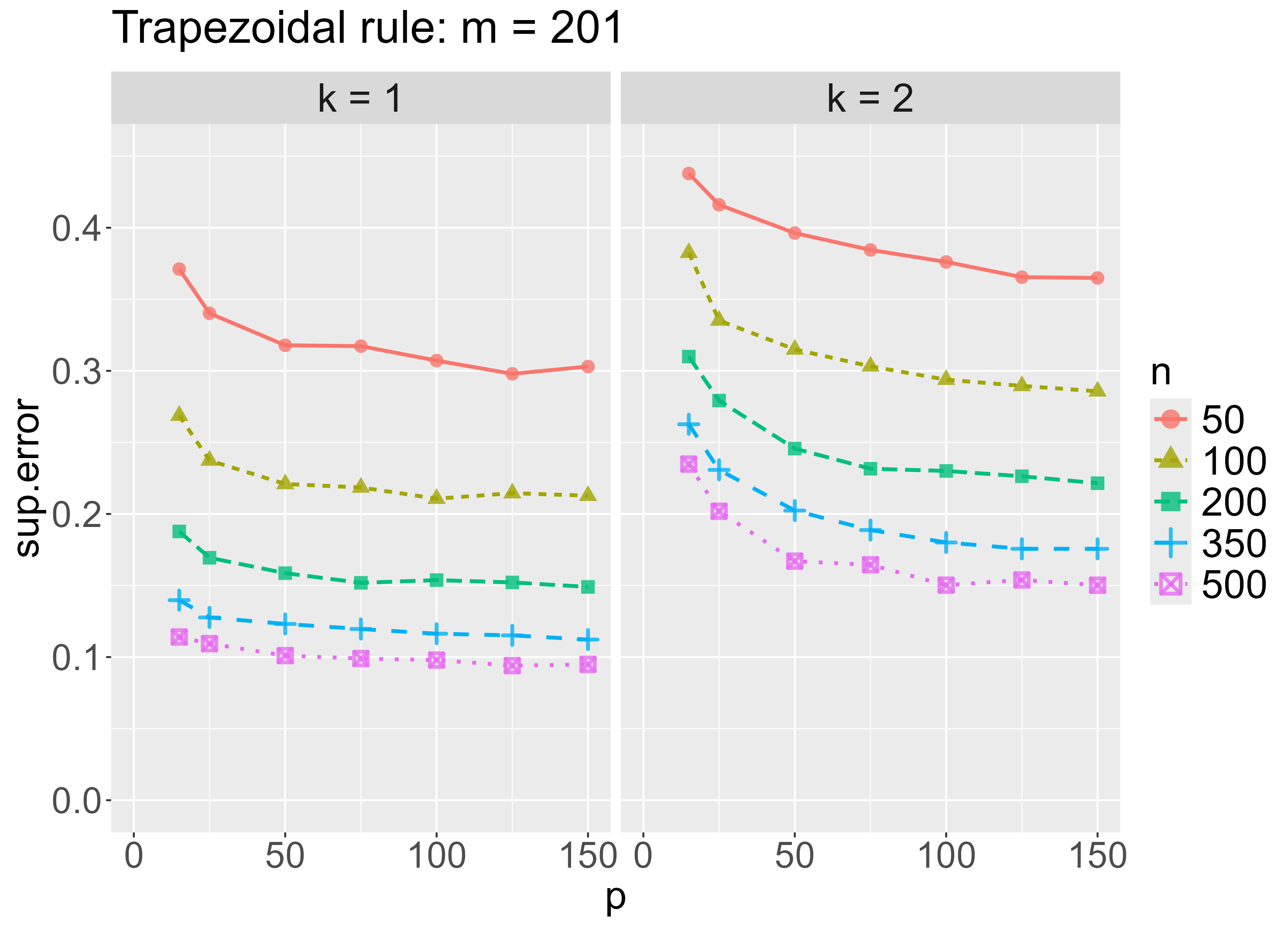}
    \caption{\small Mean of $\|\hat{\phi}_{k;n,201}(\cdotp;h^*) - \hat{c}_{k;201} \, \phi_k \|_{\infty}$.}
    \end{subfigure}
    \hfill
    \begin{subfigure}{0.49\linewidth}
    \includegraphics[width=\linewidth]{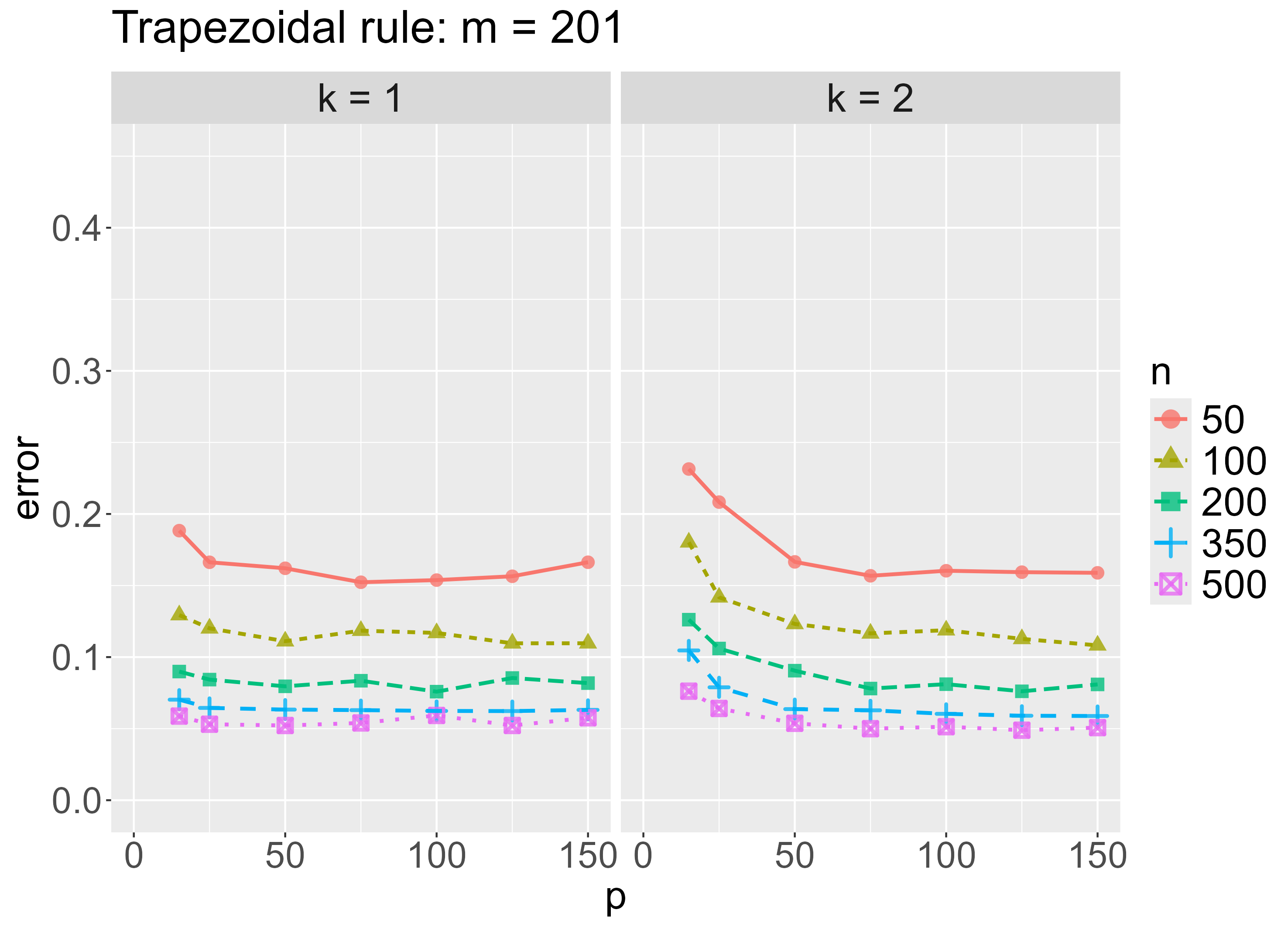}
    \caption{\small Mean of $|(\hat{\lambda}_{k;n,201}(h^*) - \lambda_{k})/\lambda_{k}|$.}
    \end{subfigure}
    \caption{Analogous to Figure \ref{fig: supnorm error} by using the trapezoidal rule.}
\end{figure}

\end{document}